\documentclass[a4paper, 10pt, twoside, notitlepage]{amsart}

\usepackage{amsmath,amscd}
\usepackage{amssymb}
\usepackage{amsthm}
\usepackage{comment}
\usepackage{graphicx, xcolor}

\usepackage{mathrsfs}
\usepackage[ocgcolorlinks, linkcolor=blue]{hyperref}

\usepackage{bm}
\usepackage{bbm}
\usepackage{url}

\usepackage[utf8]{inputenc}
\usepackage{mathtools,amssymb}
\usepackage{esint}
\usepackage{tikz}
\usepackage{dsfont}
\usepackage{relsize}
\usepackage{url}
\usepackage{xcolor}
\usepackage{graphicx}
\usepackage{mathrsfs}
\usepackage[shortlabels]{enumitem}
\usepackage{lineno}
\usepackage{amsmath}
\usepackage{enumitem}
\usepackage{amsthm} 
\usepackage{verbatim}
\usepackage{dsfont}
\numberwithin{equation}{section}

\usepackage{esint}

\allowdisplaybreaks

\mathtoolsset{showonlyrefs}

\graphicspath{{images/}}

\newtheorem{theorem}{Theorem}[section]
\newtheorem{lemma}[theorem]{Lemma}

\newtheorem{corollary}[theorem]{Corollary}

\title[Fractional Calderón problem with $L^{\infty}$-conductivities]{Fractional Calderón problem with $L^{\infty}$-conductivities}

\author[G. Uhlmann]{Gunther Uhlmann}
\address{Department of Mathematics, University of Washington, Seattle, WA 98195-4350, USA}
\email{gunther@math.washington.edu}

\author[P. Zimmermann]{Philipp Zimmermann}
\address{Institute of Mathematics, École Polytechnique Fédérale de Lausanne (EPFL), Lausanne, Switzerland}
\email{philipp.zimmermann@epfl.ch}

\date{\today}

\subjclass[2020]{35R30, 35J62, 35J70 }
\keywords{Fractional Calderón problem, fractional conductivity equation, unique continuation, Caffarelli--Silvestre extension, $L^\infty$-conductivities, tail class.}

\newcommand{\R}{{\mathbb R}}

\newcommand{\N}{{\mathbb N}}

\newcommand{\eps}{\varepsilon}

\newcommand{\schwartz}{\mathscr{S}}

\newcommand{\tempered}{\mathscr{S}^{\prime}}

\newcommand{\fourier}{\mathcal{F}}
\newcommand{\ifourier}{\mathcal{F}^{-1}}

\newcommand{\distr}{\mathscr{D}^{\prime}}

\DeclareMathOperator{\supp}{supp} 
\DeclareMathOperator{\dist}{dist} 
\DeclareMathOperator{\loc}{loc} 

\begin{document}

	\maketitle
	\begin{abstract}
     We study the inverse problem for the fractional conductivity
equation with uniformly elliptic conductivities
$\gamma\in L^\infty(\mathbb R^n)$. For every $n\geq1$ and
$0<s<1$, we prove that the local exterior Dirichlet-to-Neumann
map on any nonempty open subset of the exterior of a bounded
open domain determines $\gamma$ uniquely almost everywhere
in $\mathbb R^n$. No continuity, Sobolev regularity, or
prescribed behavior at infinity is required.

The proof works directly with the conductivity bilinear form
and avoids the fractional Liouville reduction. It combines
exterior determination at Lebesgue points, equality of weighted
solutions, and a unique continuation principle for the
fractional Laplacian in the weighted class
\[
    L_s=\left\{
        g\in L^1_{\mathrm{loc}}(\mathbb R^n):
        \int_{\mathbb R^n}
        \frac{|g(x)|}{1+|x|^{n+2s}}\,dx<\infty
    \right\}.
\]
Specifically, if $g\in L_s$ and both $g$ and $(-\Delta)^s g$
vanish in a nonempty open set, with the latter equality
understood distributionally, then $g=0$ almost everywhere.
This result requires neither local boundedness nor Sobolev
regularity of $g$. Its proof uses the Caffarelli--Silvestre
extension and boundary unique continuation for the associated
degenerate elliptic equation.
	\end{abstract}

	\tableofcontents
	
\section{Introduction}
\label{sec: introduction}

The classical Calderón problem \cite{calderon2006inverse} asks whether an unknown conductivity $\gamma$ can be recovered from boundary measurements of voltage and current. In its nonlocal analogues, one replaces the \emph{conductivity equation}
\begin{equation}
\label{eq: conductivity equation}
\begin{cases}
     \operatorname{div}\,(\gamma\nabla u)=0\quad&\text{in }\Omega,\\
     u=\varphi,\quad &\text{on }\partial\Omega
\end{cases}
\end{equation}
and the \emph{Dirichlet to Neumann (DN) map} 
\begin{equation}
    \varphi\mapsto \Lambda_\gamma \varphi\vcentcolon = \gamma \partial_\nu u_{\varphi},
\end{equation}
which encodes the voltage/current measurements, by corresponding nonlocal quantities. This line of research has been initiated by the seminal work \cite{GSU20}, which studies the Calderón problem for the \emph{fractional Schrödinger equation}
\begin{equation}
\label{eq:fractional-schroedinger}
\begin{cases}
     ((-\Delta)^s+q)v=0\quad&\text{in }\Omega,\\
     v=\psi,\quad &\text{in }\Omega_e,
\end{cases}
\end{equation}
where $\Omega_e= \R^n\setminus \overline{\Omega}$ is the exterior of $\Omega$, $q\in L^\infty (\R^n)$ is a nonnegative potential, $0<s<1$, and $(-\Delta)^s$ is the \emph{fractional Laplacian}:
\begin{equation}
\label{eq: frac Lap}
    (-\Delta)^s u=\mathcal{F}^{-1}(|\xi|^{2s}\hat{u}).
\end{equation}
In the classical limit $s=1$, equation \eqref{eq:fractional-schroedinger} reduces to the Schrödinger equation
\begin{equation}
\label{eq:schroedinger-equation}
    \begin{cases}
     (-\Delta +q)v=0\quad&\text{in }\Omega,\\
     v=\psi,\quad &\text{on }\partial\Omega,
\end{cases}
\end{equation}
whose solutions are in one-to-one correspondence to solutions of the conductivity equation \eqref{eq: conductivity equation} via the \emph{Liouville transformation}
\begin{equation}
\label{eq:Liouville-reduction}
    u\text{ solves \eqref{eq: conductivity equation}}\quad \Leftrightarrow\quad v=\gamma^{1/2}u\text{ solves \eqref{eq:schroedinger-equation} with }q=\frac{\Delta \gamma^{1/2}}{\gamma^{1/2}},\,\psi=\gamma^{1/2}\varphi.
\end{equation}
Clearly, this correspondence requires that $\gamma$ is sufficiently regular, say $\gamma\in W^{2,\infty}(\Omega)$ (see also \cite{haberman2015uniqueness} for low regularity results). Moreover, using the boundary determination result of Kohn and Vogelius \cite{KV84} for the inverse conductivity problem \eqref{eq: conductivity equation}, one can reduce via the Liouville reduction \eqref{eq:Liouville-reduction} the determination of $\gamma$ to the recovery of $q$ from the DN map $\Lambda_q$. Using \emph{complex geometric optics (CGO) solutions}, Sylvester and Uhlmann proved in \cite{SU87} that $q\mapsto \Lambda_q$ is injective in dimensions $n\geq 3$, which leads to the unique determination of $\gamma$ from $\Lambda_\gamma$

For the fractional Schrödinger equation \eqref{eq:fractional-schroedinger}, the Dirichlet to Neumann map is given by
\begin{equation}
\label{eq:DN-fractional-schroedinger}
    \Lambda^s_{q}\psi\vcentcolon =(-\Delta)^s v_{\psi}|_{W}
\end{equation}
for some nonempty open set $W\subset \Omega_e$, where $v_{\psi}\in H^s(\R^n)$ is the unique solution of \eqref{eq:fractional-schroedinger}. The space $H^s(\R^n)$ consists of all $L^2(\R^n)$ functions such that 
\[
    \|u\|_{H^s(\R^n)}\vcentcolon = (\|u\|_{L^2(\R^n)}^2+[u]_{H^s(\R^n)}^2)^{1/2}<\infty,
\]
where
\[
    [u]_{H^s(\R^n)}\vcentcolon =[u]_{W^{s,2}(\R^n)}\vcentcolon =\left(\int_{\R^{2n}}\frac{|u(x)-u(y)|^2}{|x-y|^{n+2s}}\,dxdy\right)^{1/2}.
\]
Indeed, the spaces $H^s(\R^n)$ can be extended to all $s\in\R$, but this will not be essential for our work.

In recent years, nonlocal inverse problems have attracted considerable attention because the fractional Laplacian enjoys the following unique continuation property.

\begin{theorem}[{UCP in energy spaces $H^r(\R^n)$,\cite{GSU20}}]
\label{thm:UCP-energy-spaces}
    Let $V\subset \R^n$ be a nonempty open set, $r\in\R$, and $0<s<1$. Suppose that $u\in H^r(\R^n)$ satisfies 
    \[
        u=(-\Delta)^s u =0\quad \text{in }V,
    \]
    then $u\equiv 0$ in $\R^n$.
\end{theorem}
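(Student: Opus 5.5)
The plan is to follow the Caffarelli--Silvestre route of \cite{GSU20}, first reducing to smooth functions and then using boundary unique continuation for the degenerate extension equation.

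\emph{Step 1: reduction to smooth $u$.} Fix a ball $B_{2\rho}(x_0)\Subset V$ and a mollifier $\eta_\eps$ with $\eps<\rho$. Put $u_\eps=\eta_\eps*u$. Since $(-\Delta)^s$ is a Fourier multiplier, it commutes with convolution, so $u_\eps=(-\Delta)^s u_\eps=0$ in $B_\rho(x_0)$. Moreover $u_\eps\in H^t(\R^n)$ for every $t\in\R$. If $u_\eps=0$ for all small $\eps$, then $u=\lim u_\eps=0$ in $\tempered$. Hence it suffices to treat $u\in H^t(\R^n)$ for all $t$, in particular $u\in H^s(\R^n)\cap C^\infty(\R^n)$.

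\emph{Step 2: the extension.} Let $U(x,y)$ with $y>0$ be the Caffarelli--Silvestre extension of $u$, given by Poisson kernel convolution. It solves
\[
\nabla\cdot(y^{1-2s}\nabla U)=0\quad\text{in }\R^{n+1}_+,\qquad U(\cdot,0)=u,\qquad -c_s\lim_{y\to0}y^{1-2s}\partial_yU=(-\Delta)^s u.
\]
By Step 1 both Cauchy data vanish on $B_\rho(x_0)\times\{0\}$. Extend $U$ evenly across $y=0$ to $\tilde U(x,y)=U(x,|y|)$. Because the weighted normal derivative vanishes on the ball, $\tilde U$ is a weak solution of $\nabla\cdot(|y|^{1-2s}\nabla\tilde U)=0$ in $B_\rho(x_0)\times(-\rho,\rho)$. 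The weight $|y|^{1-2s}$ is $A_2$, so this equation falls within weighted elliptic theory.

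\emph{Step 3: unique continuation.} The weak unique continuation principle for $A_2$-weighted equations of this type (Carleman estimates, or the Almgren frequency function of Fabes--Garofalo--Lin) applies here. Since $\tilde U=0$ on the codimension-one set $\{y=0\}\cap B_\rho$ and its conormal derivative also vanishes there, boundary unique continuation (Rüland's Carleman estimate) gives $\tilde U\equiv0$ near $(x_0,0)$. In the open half-space, $U$ solves a uniformly elliptic equation with real-analytic coefficients, so $U$ is real-analytic there. Since $U$ vanishes on an open subset of the connected set $\R^{n+1}_+$, we get $U\equiv0$, and therefore $u=U(\cdot,0)=0$.

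\emph{Main obstacle.} The key difficulty is Step 3: establishing boundary unique continuation from vanishing Cauchy data for the degenerate weight. I would invoke Rüland's Carleman estimate and check that its hypotheses hold for the even reflection. Two things need verification. First, that the reflection is a genuine weak solution; this comes from the vanishing weighted Neumann data together with the $H^1(y^{1-2s})$ regularity that follows from $u\in H^s$. Second, that vanishing of $U$ and of the conormal derivative on the flat piece implies infinite-order vanishing at $(x_0,0)$, which is what the strong unique continuation argument requires.
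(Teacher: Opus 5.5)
Your proposal is correct and follows essentially the route by which the paper recovers this statement, namely Corollary~\ref{cor:UCP-Bessel-potential-spaces} with $p=2$ via Theorem~\ref{theorem:bounded-fractional-ucp}: mollify, take the Caffarelli--Silvestre extension, apply R\"uland's boundary unique continuation \cite[Proposition~2.2]{ruland2015unique} to the vanishing Cauchy data, propagate by interior unique continuation, and recover $u$ as the trace. The differences are minor: you mollify into $\bigcap_t H^t(\R^n)$ and use the standard global $H^s$ extension theory, whereas the paper mollifies into $L^2\cap L^\infty\subset L_s$ and uses local kernel estimates; also, R\"uland's proposition applies directly on the half-ball with vanishing Dirichlet and weighted Neumann data, so you need neither the even reflection nor the infinite-order vanishing you list as an obstacle.
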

This unique continuation property has been extended later to the full Bessel potential scale $H^{r,p}(\R^n)$, $r\in\R$, $1\leq p<\infty$ \cite{KRZ-2023}.

Theorem~\ref{thm:UCP-energy-spaces} in turn leads to the \emph{Runge approximation property} in a suitable Banach space $X$ for nonlocal partial differential equations (PDEs) involving the fractional Laplacian and nonlocal operators of a similar kind. For instance, in the work \cite{GSU20} it is shown that the fractional Schrödinger equation enjoys the Runge approximation property in $L^2(\Omega)$:

\begin{theorem}[$L^2$ Runge approximation property]
\label{thm:Runge-approx}
    Let $\Omega\subset\R^n$ be an open bounded set, $0<s<1$, and suppose that $q\in L^\infty (\R^n)$ is a nonnegative potential. For a given nonempty open set $W\subset \Omega_e$, the Runge set
    \[
        \mathcal{R}_W =\{v_{\psi}|_{\Omega}: v_\psi \in H^s(\R^n)\text{ solves \eqref{eq:fractional-schroedinger} with }\psi\in C_c^{\infty}(W)\}
    \]
    is dense in $L^2(\Omega)$.
\end{theorem}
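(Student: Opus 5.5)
The plan is the standard Hahn--Banach duality argument. Since $\mathcal{R}_W$ is a linear subspace of $L^2(\Omega)$, it suffices to show that any $f\in L^2(\Omega)$ with $\int_\Omega f\,v_\psi\,dx=0$ for all $\psi\in C_c^\infty(W)$ must vanish. Throughout I use well-posedness of \eqref{eq:fractional-schroedinger}. With $B_q(u,w)=\langle(-\Delta)^{s/2}u,(-\Delta)^{s/2}w\rangle_{L^2}+\int qu w$ and $q\geq0$, the form $B_q$ is coercive on $\widetilde H^s(\Omega)$ by the fractional Poincaré inequality, since $\Omega$ is bounded. Writing $v_\psi=\psi+w$ with $w\in\widetilde H^s(\Omega)$, Lax--Milgram gives a unique solution for every exterior datum $\psi\in H^s(\R^n)$, and in particular for $\psi\in C_c^\infty(W)$.

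\emph{Adjoint problem.} Given such an $f$, I let $\phi\in\widetilde H^s(\Omega)$ be the unique solution of $B_q(\phi,w)=\int_\Omega f w\,dx$ for all $w\in \widetilde H^s(\Omega)$. This means $((-\Delta)^s+q)\phi=f$ in $\Omega$ and $\phi=0$ in $\Omega_e$. Fix $\psi\in C_c^\infty(W)$ and write $v_\psi=\psi+w_\psi$ with $w_\psi\in\widetilde H^s(\Omega)$. Because $\operatorname{supp}\psi\subset W\subset\Omega_e$, we have $\psi=0$ on $\Omega$, so
\[
0=\int_\Omega f v_\psi\,dx=\int_\Omega f w_\psi\,dx=B_q(\phi,w_\psi)=B_q(\phi,v_\psi)-B_q(\phi,\psi).
\]
The term $B_q(\phi,v_\psi)$ vanishes because $v_\psi$ solves the equation in $\Omega$ and $\phi\in\widetilde H^s(\Omega)$ is an admissible test function, using the symmetry of $B_q$. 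In $B_q(\phi,\psi)$ the potential part is zero, since $\phi\psi\equiv0$ by disjoint supports. Hence
\[
0=\langle (-\Delta)^s\phi,\psi\rangle \quad\text{for all }\psi\in C_c^\infty(W),
\]
that is, $(-\Delta)^s\phi=0$ in $W$ in the distributional sense.

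\emph{Unique continuation.} Now $\phi\in H^s(\R^n)$ satisfies $\phi=0$ on $W$, because $W\subset\Omega_e$, and $(-\Delta)^s\phi=0$ on $W$. Theorem~\ref{thm:UCP-energy-spaces} with $r=s$ and $V=W$ then gives $\phi\equiv0$ on $\R^n$. Consequently $f=((-\Delta)^s+q)\phi=0$ in $\Omega$, in the weak sense, which means $\int_\Omega fw=0$ for all $w\in\widetilde H^s(\Omega)$. This class contains $C_c^\infty(\Omega)$, which is dense in $L^2(\Omega)$, so $f=0$. Therefore $\mathcal R_W^\perp=\{0\}$, and $\mathcal R_W$ is dense.

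The main point requiring care is the identity $B_q(\phi,\psi)=\langle(-\Delta)^s\phi,\psi\rangle$ with the potential term dropping out, together with the correct use of symmetry. The only other genuine input is the UCP, Theorem~\ref{thm:UCP-energy-spaces}, which is assumed; everything else is routine.
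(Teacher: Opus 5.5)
Your proof is correct. It is the standard Hahn--Banach duality argument from \cite{GSU20}, which the paper cites for this statement without reproving it: an adjoint Dirichlet problem with source $f$, then $(-\Delta)^s\phi=0$ on $W$ from the disjoint supports, then Theorem~\ref{thm:UCP-energy-spaces}. The one cosmetic difference is that you get well-posedness of the forward and adjoint problems directly from $q\geq 0$ and the fractional Poincar\'e inequality, whereas \cite{GSU20} assumes instead that $0$ is not a Dirichlet eigenvalue.
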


The UCP of the fractional Laplacian and a corresponding Runge approximation theorem often lead to stronger uniqueness results for the Calderón-type inverse problems than in their classical limit (see, e.g.,~\cite{GU2021calder,RZ-unbounded,ghosh2021non,feizmohammadi2021fractional_closed,feizmohammadi2024calder,FKU24,RZ2022LowReg,zimmermann2023inverse,KLW2022,zimmermann2024optimalrungeapproximationdamped,cekic2020calderon,fu2026calderon,GRSU18,RS17,ruland2018exponential,ruland2021single,zimmermann2024-viscous-wave} and the references therein). 

In this note we consider the \emph{fractional conductivity equation}
\begin{equation}
\begin{cases}
\label{eq: fractional conductivity equation}
    L^s_\gamma u=0 &\text{in }\Omega,\\
    u=f &\text{in }\Omega_e,
\end{cases}
\end{equation}
where \(0<s<1\), \(\Omega\subset\mathbb R^n\) is a bounded open set,
\(\Omega_e=\mathbb R^n\setminus\overline\Omega\), and the conductivity satisfies
the uniform ellipticity condition
\begin{equation}
\label{eq: uniform ellipticity}
    0<\lambda_0\leq \gamma(x)\leq \lambda_1<\infty
    \qquad\text{for a.e. }x\in\mathbb R^n .
\end{equation}
The \emph{fractional conductivity operator} is formally given by
\begin{equation}
\label{eq: fractional conductivity operator}
    L^s_\gamma u(x)
    =
    c_{n,s}\,\operatorname{p.v.}
    \int_{\mathbb R^n}
    a(x)a(y)\frac{u(x)-u(y)}{|x-y|^{n+2s}}\,dy,\quad \text{with}\quad a(x)\vcentcolon =\gamma^{1/2}(x).
\end{equation}
Here, $c_{n,s}>0$ is chosen such that
\begin{equation}
\label{eq: constant fractional Lap}
    c_{n,s}\vcentcolon =\frac{4^s \Gamma(n/2+s)}{\pi^{n/2}|\Gamma(-s)|},
\end{equation}
because then $L^s_\gamma$ coincides for $\gamma=1$ with the fractional Laplacian \eqref{eq: frac Lap}.

For merely bounded conductivities, we interpret \(L^s_\gamma\) weakly through
the bilinear form
\begin{equation}
\label{eq: fractional bilinear form}
    B^s_\gamma(u,v)
    =
    \frac{c_{n,s}}2
    \iint_{\mathbb R^n\times\mathbb R^n}
    a(x)a(y)
    \frac{(u(x)-u(y))(v(x)-v(y))}
    {|x-y|^{n+2s}}\,dxdy .
\end{equation}
This bilinear form is naturally defined over the fractional Sobolev space $H^s(\R^n)$, it is continuous on $H^s(\R^n)$, and coercive over the space of test functions
\[
    \widetilde H^s(\Omega)=\overline{C_c^{\infty}(\Omega)}^{H^s(\R^n)}.
\]
Given exterior data \(f\in H^s(\mathbb R^n)\), \(u\in H^s(\mathbb R^n)\) is said to be a (weak) solution of
\eqref{eq: fractional conductivity equation}, if
\[
    u-f\in \widetilde H^s(\Omega),
    \qquad
    B^s_\gamma(u,\varphi)=0
    \quad\text{for all }\varphi\in \widetilde H^s(\Omega).
\]
By the fractional Poincar\'e inequality \cite[Theorem~2.2]{RZ-unbounded} and the Lax--Milgram theorem, the Dirichlet problem \eqref{eq: fractional conductivity equation} is well-posed for any $f\in H^s(\R^n)$, when $\gamma\in L^{\infty}(\R^n)$ is uniformly elliptic and $\Omega\subset\R^n$ a bounded open set. The unique solution $u_f\in H^s(\R^n)$ of \eqref{eq: fractional conductivity equation} satisfies the energy estimate
\begin{equation}
\label{eq:energy-estimate-frac-cond}
    \|u_f-f\|_{H^s(\R^n)}\leq C\|B^s_\gamma (f,\cdot)\|_{(\widetilde H^s(\Omega))^*}\leq C\|f\|_{H^s(\R^n)}.
\end{equation}
The \emph{DN map} $\Lambda^s_\gamma$, corresponding to \eqref{eq: fractional conductivity equation}, is defined weakly through the bilinear form $B^s_\gamma$:
\begin{equation}
\label{eq: DN map}
    \langle \Lambda^s_\gamma [f],[g]\rangle
=
B^s_\gamma(u_f,g),
\qquad [f],[g]\in X_s,
\end{equation}
where $X_s$ is the abstract trace space
\[
    X_s=H^s(\mathbb R^n)/\widetilde H^s(\Omega).
\]
We will often write $f$ instead of $[f]$, when no confusion can arise. Indeed, \eqref{eq:energy-estimate-frac-cond} ensures that the DN map $\Lambda^s_\gamma\colon X_s\to X_s^*$ is continuous. Concerning the above properties of the fractional conductivity operator and the related DN map, we refer the interested reader to \cite[Section~8]{RZ-unbounded}.

For an open set \(W\subset\Omega_e\), the notation
\[
    \Lambda^s_{\gamma_1}f|_W=\Lambda^s_{\gamma_2}f|_W,
    \qquad f\in C_c^\infty(W),
\]
means that
\[
    \langle(\Lambda^s_{\gamma_1}-\Lambda^s_{\gamma_2})f,g\rangle=0
    \qquad
    \text{for all }f,g\in C_c^\infty(W).
\]
If the conductivity is regular and decays sufficiently fast at infinity, then it is proved in \cite{RZ2022LowReg} that the fractional conductivity problem, that is, the recovery of $\gamma$ from the knowledge of $\Lambda^s_{\gamma}f|_W$, $f\in C_c^{\infty}(W)$, can be reduced by an exterior determination result and the fractional Liouville reduction $u\mapsto \gamma^{1/2}u$ to the fractional Calderón problem \eqref{eq:fractional-schroedinger}, \eqref{eq:DN-fractional-schroedinger}. Since we are dealing in this article with $L^\infty$ conductivities this reduction procedure is not possible and we take a different route.

\subsection{Main contribution}

The main result of this work is uniqueness for the fractional Calderón problem
with uniformly elliptic $L^\infty$-conductivities, in every dimension
$n\geq1$ and for every $0<s<1$ (Theorem~\ref{thm:uniqueness-1d} and Theorem~\ref{thm: uniqueness Linfty general}). More precisely, if
$\gamma_1,\gamma_2\in L^\infty(\mathbb R^n)$ satisfy
\[
    0<\lambda_0\leq \gamma_j\leq \lambda_1<\infty,
    \qquad j=1,2,
\]
and if their local exterior DN maps agree on a nonempty open set
$W\subset\Omega_e$, then
\[
    \gamma_1=\gamma_2
    \qquad\text{a.e. in }\mathbb R^n.
\]
Thus no continuity, differentiability, or Sobolev regularity of the
conductivity is required.

As explained above, earlier low-regularity results for the fractional conductivity problem proceed
through a fractional Liouville reduction, which transforms the conductivity
equation into a fractional Schrödinger equation \cite{RZ2022LowReg}. In particular, this
reduction requires multiplier estimates for the background deviation
\[
    m_\gamma=\gamma^{1/2}-1,
\]
and therefore imposes regularity assumptions on $\gamma$. The present
argument avoids the Liouville reduction entirely and works directly with the
conductivity bilinear form \eqref{eq: fractional bilinear form}.

In particular, this resolves the $L^\infty$-uniqueness question posed in
\cite[Question~1.10]{RZ2022LowReg}. The result can be viewed as a fractional
analogue of the Astala--Päivärinta theorem  for the two-dimensional classical
Calderón problem \cite{AP06}: in both cases, the conductivity is recovered under the
optimal qualitative assumption of uniform ellipticity and essential
boundedness. In the fractional setting considered here, the corresponding
$L^\infty$-uniqueness holds in all dimensions.

The proof also strengthens the exterior determination step. We do not assume
that $\gamma$ is continuous on the measurement set. Instead, the exterior
data concentrate at Lebesgue points, which gives
\[
    \gamma_1=\gamma_2
    \qquad\text{a.e. in }W.
\]
A further contribution is unique continuation for the fractional
Laplacian in the full weighted class $L_s$
(Theorem~\ref{theorem:bounded-fractional-ucp}).
If $g\in L_s$ vanishes almost everywhere in a nonempty open
set and $(-\Delta)^s g$ vanishes there distributionally,
then $g=0$ almost everywhere in $\mathbb R^n$.
No local boundedness or Sobolev regularity is assumed.
Since $L^p(\mathbb R^n)\subset L_s$ for every
$1\leq p\leq\infty$, the theorem applies, in particular,
to every function in any of these Lebesgue spaces. Moreover, as a corollary, we recover the unique continuation property for the Bessel potential spaces $H^{r,p}(\R^n)$, $1\leq p<\infty$, $r\in\R$ (see Corollary~\ref{cor:UCP-Bessel-potential-spaces}). This is the starting point for our global argument.

Several natural questions remain open. The method uses the same exterior set
for prescribing data and making measurements, and therefore does not settle the
genuinely disjoint partial-data problem. It also does not provide quantitative
stability estimates or a global reconstruction procedure in the full $L^\infty$
class. These problems appear to require additional ideas.

\subsection{Idea of the proof}

We describe the main ingredients. The key point is to avoid the fractional
Liouville reduction and use the nonlocal conductivity form directly.

The first step is exterior determination at Lebesgue points; see, e.g., Lemma~\ref{lemma: exterior determination for n=1 s=1/2}. Let
$x_0\in W$ be a common Lebesgue point of
\[
    a_j\vcentcolon =\gamma_j^{1/2},\qquad j=1,2.
\]
We choose exterior data $\psi_N\in C_c^\infty(W)$ concentrating at $x_0$
and normalized in the $H^s$-energy. The exterior energy estimate \cite[Lemma~3.1]{RZ2022LowReg} implies
that the corresponding solutions are close to $\psi_N$ in $H^s(\mathbb R^n)$.
Consequently the DN energy is asymptotically given by
$B^s_{\gamma_j}(\psi_N,\psi_N)$. After scaling, the relevant diagonal kernel
is an $L^1$-approximate identity, and we can demonstrate that
\[
    B^s_{\gamma_j}(\psi_N,\psi_N)\to \gamma_j(x_0).
\]
Equality of the DN maps implies $\gamma_1(x_0)=\gamma_2(x_0)$. Since this
holds at almost every point of $W$, we obtain
\[
    \gamma_1=\gamma_2
    \qquad\text{a.e. in }W.
\]

The second step is a comparison of weighted solutions (Lemma~\ref{lemma: equality of weighted solutions}). Once
$a_1=a_2$ in $W$, take exterior data $f\in C_c^\infty(W)$ which vanish
near an open set $V\Subset W$. If $u_j^f$ is the solution corresponding to
$\gamma_j$, define
\[
    h=a_1u_1^f-a_2u_2^f .
\]
Then $h=0$ in $V$. Testing the equality of the DN maps against functions
supported in $V$, and using the kernel representation of the bilinear form,
one obtains
\[
    (-\Delta)^s h=0\quad\text{in }V.
\]
The $L^2$ unique continuation property for the fractional Laplacian (Theorem~\ref{thm:UCP-energy-spaces}) then gives
\[
    a_1u_1^f=a_2u_2^f
    \qquad\text{in }\mathbb R^n.
\]

The third step recovers the coefficient globally. Choose open sets
$U,V\Subset W$ with $\overline U\cap \overline V=\emptyset$, and let $f\in C_c^\infty(W)$ satisfy
\[
    f\geq c_f>0\quad\text{on }U,
    \qquad
    f=0\quad\text{in a neighborhood of }\overline V.
\]
By the preceding comparison,
\[
    w:=a_1u_1^f=a_2u_2^f
    \qquad\text{in }\mathbb R^n.
\]
Since $u_j^f=f$ in $U\subset\Omega_e$, we have
\[
    w=a_1f=a_2f
    \quad\text{in }U,
\]
and in particular $w$ is bounded from below on $U$. Testing the DN equality
on $U$ then yields, for
\[
    \delta_a=a_1-a_2,
\]
the identities
\[
    \delta_a=0,
    \qquad
    (-\Delta)^s\delta_a=0
    \quad\text{in }U.
\]
At this point the usual $H^r(\R^n)$ unique continuation property Theorem~\ref{thm:UCP-energy-spaces} or its generalization for the $H^{r,p}(\R^n)$ space \cite{KRZ-2023} are not directly applicable, since $\delta_a$ is only bounded.

The final ingredient is unique continuation in the full tail
class $L_s$ (Theorem~\ref{theorem:bounded-fractional-ucp}).
This class contains all bounded functions, so the theorem
applies to $\delta_a$ without assumptions on its behavior
at infinity.

To prove this UCP, we consider the Caffarelli--Silvestre
extension $U(x,y)=P_y^s*g(x)$ for $g\in L_s$.
Although $g$ need not be locally bounded, its vanishing on
an open set separates the nonzero boundary data from each
compact subset of that set. Kernel estimates then give
local boundedness and weighted $H^1$-regularity of $U$
up to the corresponding boundary portion, together with
zero trace. A direct computation identifies the weighted
conormal derivative with a constant multiple of
$(-\Delta)^s g$, so it also vanishes there.
Boundary unique continuation yields local vanishing of $U$,
and interior unique continuation propagates this vanishing
throughout the upper half-space. Finally,
$P_y^s*g\to g$ in $L^1_{\mathrm{loc}}(\mathbb R^n)$
gives $g=0$ almost everywhere.

Applying this UCP to $\delta_a$ gives
\[
    a_1=a_2
    \qquad\text{a.e. in }\mathbb R^n,
\]
and hence
\[
    \gamma_1=\gamma_2
    \qquad\text{a.e. in }\mathbb R^n.
\]

For clarity, we first present the proof in the model case $n=1$,
$s=1/2$ and $g\in L^\infty (\R)$. In that case the Caffarelli--Silvestre extension is simply the
harmonic extension to the upper half-plane, and the main ideas are particularly
transparent. Moreover, along the way we establish the unique continuation property of the half Laplacian for bounded functions. We then treat the general case $n\geq1$, $0<s<1$, where the
harmonic extension is replaced by the degenerate elliptic Caffarelli--Silvestre extension and $g\in L_s$.

\subsection{Notation}

In this section we introduce the notational conventions we shall use throughout this article. Let $A$ and $B$ be subsets of $\R^n$, $n\geq 1$. The closure and the interior of $A$ are denoted by $\overline{A}$ and $\operatorname{int} A$, respectively. The distance between $A$ and $B$ is 
\[
    \dist(A,B)\vcentcolon = \inf_{x\in A,\,y\in B}|x-y|
\]
and, if $A$ is bounded, then we set
\[
    \ell_A\vcentcolon = \sup_{x\in A}|x|.
\]
Moreover, we write $A\Subset B$ if the closure of $A$ is compact and contained in the interior of $B$. For any $x_0\in \R^n$ and $r>0$, $B_r(x_0)$ denotes the open ball of radius $r$ centered at $x_0$. If $x_0=0$, then we usually write $B_r$ in place of $B_r(0)$.

If $U\subset \R^n$ is a (Lebesgue) measurable subset and $1\leq p\leq \infty$, then we write $L^p(U)$ and $L^p_{\rm loc}(U)$ for the usual Lebesgue spaces. The normalized average of a function $f\in L^1_{\rm loc}(\R^n)$ over a measurable set $A\subset \R^n$ with positive Lebesgue measure is denoted by
\begin{equation}
\label{eq:average}
    \fint_{A}f\,dx\vcentcolon = \frac{1}{|A|}\int_A f\,dx.
\end{equation}
We denote by $H^r(\R^n)$, $r\in\R$, the $L^2$-based fractional Sobolev space, that is,
\[
    H^r(\R^n)\vcentcolon =\{ u\in \tempered(\R^n):\|\ifourier ((1+|\xi|^2)^{r/2}\hat{u})\|_{L^2(\R^n)}<\infty\}.
\]
For $s\geq 0$, this space coincides with the one introduced above and the norm $\|u \|_{H^s(\R^n)}$ is equivalent to  $\|\ifourier ((1+|\xi|^2)^{r/2}\hat{u})\|_{L^2(\R^n)}$. Above $\tempered (\R^n)$ is the space of tempered distributions, which is the dual of the Schwartz space $\schwartz (\R^n)$ and it is contained in the space of distributions $\distr(\R^n)=(C_c^{\infty}(\R^n))'$, and $\hat{u}=\fourier u$ is the Fourier transform of $u\in \tempered(\R^n)$. Moreover, for open sets $\Omega\subset\R^n$, we shall make use of the space
\[
    \widetilde H^r(\Omega)
    =
    \overline{C_c^\infty(\Omega)}^{H^r(\mathbb R^n)},
\]
whose elements have zero exterior values, as well as the exterior quotient space
\[
    X_s:=H^s(\mathbb R^n)/\widetilde H^s(\Omega).
\]
We often identify an exterior class $[f]\in X_s$
with a representative $f\in H^s(\mathbb R^n)$.

Additionally, for $0<s<1$, we shall use the following weighted $L^1$ space
\begin{equation}
\label{eq: weighted space}
    L_s=
    \left\{
    u\in L^1_{\mathrm{loc}}(\mathbb R^n):
    \|u\|_{L_s}\vcentcolon = \int_{\mathbb R^n}
    \frac{|u(x)|}{1+|x|^{n+2s}}\,dx<\infty
    \right\}.
\end{equation}

The fractional Laplacian is normalized as
\begin{equation}
\label{eq: singular integral frac Lap}
    (-\Delta)^s u(x)
    \vcentcolon =
    c_{n,s}\operatorname{p.v.}
    \int_{\mathbb R^n}
    \frac{u(x)-u(y)}{|x-y|^{n+2s}}\,dy,
\end{equation}
with $c_{n,s}>0$ as in \eqref{eq: constant fractional Lap}. We also use the
Fourier multiplier definition
\begin{equation}
\label{eq: multiplier frac Lap}
    (-\Delta)^s u \vcentcolon =\mathcal F^{-1}(|\xi|^{2s}\widehat u)
\end{equation}
whenever convenient. Indeed, the multiplier definition of the fractional Laplacian makes sense for any $u\in H^r(\R^n)$, $r\in\R$, and it is continuous from $H^r(\R^n)$ to $H^{r-2s}(\R^n)$ by the Mikhlin multiplier theorem. But we can also use it for functions in $L_s$ or $\schwartz_s'(\R^n)$, which is the dual space of
\[
    \schwartz_s(\R^n)\vcentcolon = \{u\in C^{\infty}(\R^n): (1+|x|^{n+2s})\partial^\alpha u\in L^{\infty}(\R^n),\,\forall \alpha\in\N^n_0\}.
\]
We recall that the topology of $\schwartz_s(\R^n)$ is induced by the family of seminorms
\[
    [u]_k\vcentcolon =\sup_{|\alpha|\leq k}\|(1+|x|^{n+2s})\partial^\alpha u\|_{L^{\infty}(\R^n)}
\]
for $k\in\N_0$. Indeed, for $\schwartz_s'(\R^n)$ and $f\in \schwartz (\R^n)$, we may define
\begin{equation}
\label{eq: fractional Lap on Ss}
    \langle (-\Delta)^s u,f\rangle \vcentcolon = \langle u,(-\Delta)^s f\rangle,
\end{equation}
where $(-\Delta)^s f$ can be computed via the singular integral definition \eqref{eq: singular integral frac Lap} or the equivalent multiplier definition \eqref{eq: multiplier frac Lap}.

The right hand side of \eqref{eq: fractional Lap on Ss} is well-defined, since $(-\Delta)^s f\in \schwartz_s(\R^n)$. Indeed, the assignment 
\[
    \schwartz(\R^n)\ni f\mapsto (-\Delta)^s f\in \schwartz_s(\R^n)
\]
is continuous. Thus, the resulting operator $(-\Delta)^s\colon \schwartz_s'(\R^n)\to \tempered(\R^n)$, defined via \eqref{eq: fractional Lap on Ss}, is continuous. In particular, for every $u\in L_s$ and $f\in \schwartz (\R^n)$, we have
\[
    \langle (-\Delta)^s u,f\rangle=\int_{\R^n}u (-\Delta)^s f\,dx.
\]

\section{The model case: \texorpdfstring{$n=1$, $s=1/2$}{model}}
\label{sec: 1d half order}

Before proving our general uniqueness result, we present first the argument in the
one-dimensional half-order case. This case contains the main ideas of the proof
but avoids the technicalities of the Caffarelli--Silvestre extension with
degenerate weights. In particular, the bounded-data unique continuation
principle follows from the classical Poisson extension and elementary unique
continuation for harmonic functions. The general case $n\geq1$, $0<s<1$,
is treated in Section~\ref{sec: higher-dimensional generalization}. 

As we have mentioned above, our argument does not use the fractional Liouville reduction \cite{RZ2022LowReg}; it only assumes uniform ellipticity and boundedness of the conductivities. Indeed, in the case $n=1$, $s=1/2$ the methods developed in \cite{RZ2022LowReg} break down, even for smooth conductivities.

Hence, throughout this section, we suppose that $\Omega\subset\mathbb R$ is an open bounded set, $W\subset\Omega_e$ is a
nonempty open set in the exterior $\Omega_e$, for $j=1,2$ we have given two conductivities
\[
    \gamma_j\in L^\infty(\mathbb R),
    \qquad
    0<\lambda_0\leq \gamma_j\leq \lambda_1<\infty,
\]
and their square roots are denoted by
\[
    a_j (x)=\gamma_j^{1/2}(x),\quad x\in\R.
\]
To simplify the notation, we introduce the following abbreviations for the fractional conductivity operator, the bilinar form, the DN map, and the trace space: 
\[
   L_\gamma \vcentcolon = L^1_\gamma,\quad  B_\gamma(\cdot,\cdot)\vcentcolon = B^{1/2}_\gamma (\cdot,\cdot),\quad \Lambda_\gamma\vcentcolon = \Lambda^{1/2}_\gamma,\quad X\vcentcolon = X_{1/2}.
\]

\subsection{Exterior determination of bounded conductivities}

Our first result is exterior determination of bounded conductivities,
which relaxes the continuity assumption in
\cite[Theorem~3.2]{RZ2022LowReg}. Since our coefficients need not be
continuous, we first specify how pointwise assertions are understood.

In Lemma~\ref{lemma: exterior determination for n=1 s=1/2} and throughout the manuscript when we make assertions about the pointwise behaviour of a function $h\in L^1_{\rm loc}(\R^n)$, then they have always to be understood as properties of their \emph{precise representative} $h^\star\colon\R^n\to \R$, defined by
\begin{equation}
\label{eq:piontwise-representative}
    h^\star (x)\vcentcolon = \begin{cases}
        \lim_{r\downarrow  0}\fint_{B_r(x)}h\,dy,\quad&\text{when the limit exists},\\
        0,\quad&\text{otherwise}.
    \end{cases}
\end{equation}
Here we use the notation $\fint_A=|A|^{-1}\int_A$ for every Lebesgue measurable set $A\subset \R^n$ with positive finite measure $0<|A|<\infty$. This definition of $h^\star$ depends only on the equivalence class of $h$.

We call $x\in\mathbb R^n$ a \emph{Lebesgue point} of $h$ if
\begin{equation}
\label{eq:Lebesgue-point}
    \lim_{r\downarrow0}
    \fint_{B_r(x)}|h(y)-h^\star(x)|\,dy=0.
\end{equation}
By Lebesgue's differentiation theorem
\cite[Section~1.7]{evans-mt}, almost every point is a Lebesgue
point of $h$, and $h^\star=h$ almost everywhere.

In particular, if $a\in L^\infty(\mathbb R^n)$ and $\gamma=a^2$,
then every Lebesgue point $x$ of $a$ is a Lebesgue point of
$\gamma$, and
\[
    \gamma^\star(x)=(a^\star(x))^2.
\]
Indeed,
\[
    \fint_{B_r(x)}
    |a(y)^2-(a^\star(x))^2|\,dy
    \le
    2\|a\|_{L^\infty(\mathbb R^n)}
    \fint_{B_r(x)}|a(y)-a^\star(x)|\,dy
    \to 0.
\]
We suppress the superscript $\star$ in what follows.

\begin{lemma}[Exterior determination at Lebesgue points]
\label{lemma: exterior determination for n=1 s=1/2}
 Let $x_0\in W$ be a common Lebesgue point of
$a_1$ and $a_2$. If
\begin{equation}
    \label{eq:equal-DN-ext-1d}
    \Lambda_{\gamma_1}f|_W=\Lambda_{\gamma_2}f|_W
    \qquad\text{for all }f\in C_c^\infty(W),
\end{equation}
then
\[
    \gamma_1(x_0)=\gamma_2(x_0).
\]
Consequently, $\gamma_1=\gamma_2$ a.e. in $W$.
\end{lemma}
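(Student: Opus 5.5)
We follow the strategy sketched in the introduction: test the DN maps with exterior data that concentrate at $x_0$, and show that the diagonal DN energy converges to $\gamma_j(x_0)$.

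\emph{Construction of the data.} Fix $\phi\in C_c^\infty(B_1)$ with $[\phi]_{H^{1/2}(\mathbb R)}\neq0$. For large $N$ set
\[
\psi_N(x)=\phi(N(x-x_0)),
\]
so that $\supp\psi_N\subset B_{1/N}(x_0)\subset W$. The Gagliardo seminorm with $s=1/2$, $n=1$ is scale invariant: $[\psi_N]_{H^{1/2}}=[\phi]_{H^{1/2}}$. Normalize so that $\frac{c_{1,1/2}}{2}[\phi]^2_{H^{1/2}}=1$. Moreover $\|\psi_N\|_{L^2}^2=N^{-1}\|\phi\|_{L^2}^2\to0$, so $\psi_N$ is bounded in $H^{1/2}$.

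\emph{Step 1: solutions are close to the data.} Let $u_j^N$ solve the problem with exterior value $\psi_N$ and $\gamma_j$. By the energy estimate \eqref{eq:energy-estimate-frac-cond},
\[
\|u_j^N-\psi_N\|_{H^{1/2}}\le C\|B_{\gamma_j}(\psi_N,\cdot)\|_{(\widetilde H^{1/2}(\Omega))^*}.
\]
For $\varphi\in\widetilde H^{1/2}(\Omega)$ the supports of $\psi_N$ and $\varphi$ are separated by $d=\dist(W',\Omega)>0$, where $W'\Subset W$ contains $B_{1/N}(x_0)$. Hence only cross terms survive:
\[
B_{\gamma_j}(\psi_N,\varphi)=-c\iint a_j(x)a_j(y)\frac{\psi_N(x)\varphi(y)}{|x-y|^{2}}\,dxdy .
\]
This is bounded by
\[
C d^{-2}\|\psi_N\|_{L^1}\|\varphi\|_{L^1(\Omega)}\le C'N^{-1}\|\varphi\|_{L^2},
\]
using that $\Omega$ is bounded. (One could alternatively invoke \cite[Lemma~3.1]{RZ2022LowReg}.) Thus $u_j^N-\psi_N\to0$ in $H^{1/2}$.

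Now, since $\psi_N=u^N_j-(u^N_j-\psi_N)$ and $u^N_j$ is a solution, $B_{\gamma_j}(u^N_j,\psi_N)-B_{\gamma_j}(\psi_N,\psi_N)=B_{\gamma_j}(u_j^N-\psi_N,\psi_N)$. By continuity of $B_{\gamma_j}$ and boundedness of $\psi_N$ in $H^{1/2}$, this tends to $0$. By \eqref{eq:equal-DN-ext-1d} the two DN energies coincide, so
\[
\lim_{N\to\infty}\bigl(B_{\gamma_1}(\psi_N,\psi_N)-B_{\gamma_2}(\psi_N,\psi_N)\bigr)=0.
\]

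\emph{Step 2 (main obstacle): $B_{\gamma_j}(\psi_N,\psi_N)\to\gamma_j(x_0)$.} Changing variables $x=x_0+X/N$, $y=x_0+Y/N$ gives
\[
B_{\gamma_j}(\psi_N,\psi_N)=\frac{c}{2}\iint a_j(x_0+X/N)\,a_j(x_0+Y/N)\,K(X,Y)\,dXdY,
\qquad K(X,Y)=\frac{(\phi(X)-\phi(Y))^2}{|X-Y|^2}\in L^1(\mathbb R^2).
\]
Write
\[
a(x)a(y)-a(x_0)^2=(a(x)-a(x_0))a(y)+a(x_0)(a(y)-a(x_0)).
\]
By symmetry of $K$, it suffices to show
\[
\iint |a_j(x_0+X/N)-a_j(x_0)|\,K(X,Y)\,dXdY\to0 .
\]
Set $k(X)=\int K(X,Y)\,dY\in L^1(\mathbb R)$. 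The integral then equals $\int|a_j(x_0+X/N)-a_j(x_0)|\,k(X)\,dX$.

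The difficulty is that $k$ is not compactly supported: $k(X)\sim|X|^{-2}$ for large $|X|$, so the Lebesgue point property cannot be used directly. Split the integral at $|X|\le R$ and $|X|>R$.
\begin{itemize}
\item On $|X|>R$ the integrand is bounded by $2\lambda_1^{1/2}\int_{|X|>R}k$, which is small for $R$ large.
\item On $|X|\le R$, the function $k$ is bounded, since $\phi$ is Lipschitz and compactly supported. Hence this part is at most
\[
\|k\|_\infty\, R\,\fint_{B_{R/N}(x_0)}|a_j-a_j(x_0)|\cdot 2\to0
\]
by \eqref{eq:Lebesgue-point}.
\end{itemize}
Letting $N\to\infty$ and then $R\to\infty$ gives
\[
B_{\gamma_j}(\psi_N,\psi_N)\to a_j(x_0)^2\,\frac{c}{2}[\phi]^2_{H^{1/2}}=\gamma_j(x_0).
\]
Combined with Step 1, this yields $\gamma_1(x_0)=\gamma_2(x_0)$.

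\emph{Conclusion.} By Lebesgue's differentiation theorem, almost every point of $W$ is a common Lebesgue point of $a_1$ and $a_2$. Since $\gamma_j^\star=(a_j^\star)^2$ at such points and $\gamma_j^\star=\gamma_j$ almost everywhere, we obtain $\gamma_1=\gamma_2$ a.e. in $W$.
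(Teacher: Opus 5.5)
Your proof is correct and follows the paper's route: you use concentrating data $\psi_N$ normalized in the scale-invariant $H^{1/2}$ seminorm, the exterior energy estimate to reduce the DN energy to $B_{\gamma_j}(\psi_N,\psi_N)+o(1)$, and an approximate-identity argument at the Lebesgue point $x_0$. The differences are minor simplifications: you derive the exterior estimate from disjointness of supports (with rate $N^{-1}$) rather than citing \cite[Lemma~3.1]{RZ2022LowReg}, and you use that $k$ is bounded ($\phi$ being Lipschitz with compact support) to handle $|X|\le R$ directly, which avoids the step-function approximation of $K$ that the paper needs because it only uses $K\in L^1$.
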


\begin{proof}
First, let us choose an open bounded interval $W_0\Subset W$ such that
\[
    x_0\in W_0\quad\text{and}\quad \dist(W_0,\Omega)>0.
\]
Choose $\psi\in C_c^\infty((-1,1))$, not identically zero, and, for $N>0$, set
\begin{equation}
\label{eq:def_psiN}
    \psi_N(x)=A\psi(N(x-x_0)),
\end{equation}
where $A>0$ is selected so that
\begin{equation}
\label{eq: normalization}
    \frac{c_{1,1/2}}{2}\int_{\mathbb R^2}
    \frac{|\psi_N(x)-\psi_N(y)|^2}{|x-y|^2}\,dx\,dy=1
\end{equation}
for every $N >0$. This is possible because the left hand side is independent of
$N$, by the change of variables $X=N(x-x_0)$, $Y=N(y-x_0)$. Moreover,
\begin{equation}
\label{eq:L2-vanishing-psiN}
    \|\psi_N\|_{L^2(\mathbb R)}^2
    =
    N^{-1}A^2\|\psi\|_{L^2(\mathbb R)}^2
    \to 0.
\end{equation}
Since $\psi\in C_c^\infty((-1,1))$ and $W_0$ is an open interval containing $x_0$, we have $\psi_N\in C_c^\infty(W_0)$ for all sufficiently large $N>0$.

Let $u_{j,N}\in H^{1/2}(\R)$ be the unique solution of $L_{\gamma_j} u=0$ in $\Omega$ with exterior value $f=\psi_N$. The exterior
energy estimate \eqref{eq:energy-estimate-frac-cond} and $\operatorname{dist}(W_0,\Omega)>0$, together with the support conditions $\supp(u_{j,N}-\psi_N)\subset \overline\Omega$ and $\supp \psi_N\subset W_0$, imply that
\begin{equation}
\label{eq:energy-estimate-ext-1d}
    \|u_{j,N}-\psi_N\|_{H^{1/2}(\mathbb R)}
    \leq C\|\psi_N\|_{L^2(W_0)}
    \to 0;
\end{equation}
see \cite[Lemma~3.1]{RZ2022LowReg}. Thus, the definition of the DN map $\Lambda_\gamma$ \eqref{eq: DN map}, $u_{j,N}-\psi_N\in \widetilde H^{1/2}(\Omega)$, and $L_{\gamma_j}u_{j,N}=0$ in $\Omega$ yield
\begin{equation}
\label{eq:DN-decomp}
    \begin{split}
    &\langle\Lambda_{\gamma_j}\psi_N,\psi_N\rangle
    =
    B_{\gamma_j}(u_{j,N},u_{j,N})\\
    &\quad = B_{\gamma_j}(\psi_N,\psi_N)+2B_{\gamma_j}(u_{j,N}-\psi_N,\psi_N)+B_{\gamma_j}(u_{j,N}-\psi_N,u_{j,N}-\psi_N)\\
    &\quad = B_{\gamma_j}(\psi_N,\psi_N)+B_{\gamma_j}(u_{j,N}-\psi_N,\psi_N)\\
    &\quad =
    B_{\gamma_j}(\psi_N,\psi_N)+o(1)
    \end{split}
\end{equation}
as $N\to\infty$. In the last step, we used the continuity of the bilinear form $B_\gamma$, \eqref{eq:energy-estimate-ext-1d}, \eqref{eq: normalization}, and \eqref{eq:L2-vanishing-psiN}.

We claim that
\begin{equation}
\label{eq:convergence-psiN-energy}
    B_{\gamma_j}(\psi_N,\psi_N)\to\gamma_j(x_0)\quad \text{as }N\to\infty.
\end{equation}
For the time bing, let us fix $j=1,2$, and set $a\vcentcolon =a_j$, $a_0\vcentcolon =a(x_0)$. Since $a\in L^{\infty}(\R)$ its precise representative is bounded,
\begin{equation}
\label{eq:product-estimate}
    |a(x)a(y)-a_0^2|
    \leq
    C\bigl(|a(x)-a_0|+|a(y)-a_0|\bigr).
\end{equation}
Using \eqref{eq: normalization} and symmetry in $x,y$, we get
\begin{equation}
\label{eq: estimate bilinear form}
\begin{aligned}
    \big|B_{\gamma_j}(\psi_N,\psi_N)-a_0^2\big|
    &\leq
    C\int_{\mathbb R}
    |a(z)-a_0|K_N(z)\,dz,
\end{aligned}
\end{equation}
where
\[
    K_N(z)
    :=
    \int_{\mathbb R}
    \frac{|\psi_N(x)-\psi_N(z)|^2}{|x-z|^2}\,dx .
\]
By scaling and \eqref{eq:def_psiN},
\begin{equation}
\label{eq: def KN}
    K_N(z)=N K(N(z-x_0))\quad\text{with}\quad
    K(Z):=A^2\int_{\mathbb R}
    \frac{|\psi(X)-\psi(Z)|^2}{|X-Z|^2}\,dX .
\end{equation}
The kernel $K$ belongs to $L^1(\mathbb R)$, because $C_c^{\infty}(\R)\subset H^{1/2}(\R)$.
By the normalization of $\psi_N$, \eqref{eq: normalization}, we have
\[
    m_K \vcentcolon =\int_{\mathbb R}K(Z)\,dZ=\frac{2}{c_{1,1/2}}.
\]
In particular
$(K_N)_{N>0}$ is an $L^1$-approximate
identity with total mass $m_K$. More explicitly, for every $\delta>0$, by Lebesgue's dominated convergence theorem, we deduce that
\begin{equation}
\label{eq: vanishing at infinity}
    \int_{|z-x_0|>\delta}K_N(z)\,dz
    =
    \int_{|Z|>N\delta}K(Z)\,dZ
    \to 0.
\end{equation}
Since $x_0$ is a Lebesgue point of $a$, we obtain the convergence
\begin{equation}
\label{eq: Lebesgue point argument}
    \int_{\mathbb R}|a(z)-a_0|K_N(z)\,dz\to 0\quad \text{as}\quad N\to\infty.
\end{equation}
Next we demonstrate this in detail. First, \eqref{eq: def KN} and the change of variables $Z=N(z-x_0)$ yield
\[
\int_{\mathbb R}|a(z)-a_0|K_N(z)\,dz
=
\int_{\mathbb R}|a(x_0+Z/N)-a_0|K(Z)\,dZ.
\]
Now we split the integral into a near part and a tail, $\int_{\mathbb R}
=
\int_{|Z|\le R}
+
\int_{|Z|>R}$, for some fixed $R>0$. For the tail, since $a$ is bounded, we get
\[
    \int_{|Z|>R}|a(x_0+Z/N)-a_0|K(Z)\,dZ
\le
2\|a\|_{L^\infty(\R)}\int_{|Z|>R}K(Z)\,dZ.
\]
Because $K\in L^1(\R)$, for any $\eps>0$, 
\begin{equation}
\label{eq: model tail estimate}
    \int_{|Z|>R}|a(x_0+Z/N)-a_0|K(Z)\,dZ <\eps
\end{equation}
for sufficiently large $R>0$. Next, we show that for every fixed $R>0$, we have
\begin{equation}
\label{eq:convergence-bulk-part}
    \int_{|Z|\le R}|a(x_0+Z/N)-a_0|K(Z)\,dZ \to 0
\end{equation}
as $N\to\infty$. Since $K\in L^1([-R,R])$ is nonnegative, there exists a step function\footnote{For any set $A\subset\R^n$, $\chi_A$ denotes the characteristic function of $A$.}
\begin{equation}
\label{eq:step-fct}
    S(Z)=\sum_{m=1}^M c_m \chi_{I_m}(Z)
\end{equation}
such that
\begin{equation}
\label{eq: approximation by step function}
    \|K\chi_{[-R,R]}-S\|_{L^1} < \eps,
\end{equation}
where, without loss of generality, we may assume that each $I_m$ is a bounded interval contained in $[-R,R]$. 

Indeed, by the standard mollification argument, there exists a nonnegative function $H\in C([-R,R])$ such that
\[
    \|K\chi_{[-R,R]}-H\|_{L^1([-R,R])}<\eps/2.
\]
Using the uniform continuity of $H$ on $[-R,R]$, we may choose a partition $-R=t_0<t_1<\cdots<t_M=R$ of $[-R,R]$ such that
\[
    |H(x)-H(y)| < \frac{\varepsilon}{4R},
\]
whenever $x,y\in [t_{m-1},t_{m}]$ for some $m=1,\ldots,M$. Now, define the step function
\[
    S(x)\vcentcolon =\sum_{m=1}^M H(\xi_m)\chi_{[t_{m-1},t_m)}(x),
\]
where $\xi_m\in [t_{m-1},t_m)$, $m=1,\ldots,M$, which has by construction the form \eqref{eq:step-fct} with
\[
    c_m\vcentcolon = H(\xi_m)\quad\text{and}\quad I_m\vcentcolon = [t_{m-1},t_m),\quad\text{for }m=1,\ldots,M.
\]
Then
\[
    \|H-S\|_{L^1([-R,R])}
\le
\sum_{m=1}^M\int_{t_{m-1}}^{t_m} |H(\xi_m)-H(x)|\,dx
\leq 
\varepsilon/2.
\]
Therefore we deduce that the step function $S$ satisfies
\[
    \|K\chi_{[-R,R]}-S\|_{L^1([-R,R])}
\le
\|K\chi_{[-R,R]}-H\|_{L^1([-R,R])}+\|H-S\|_{L^1([-R,R])}
<
\varepsilon,
\]
which proves the approximation property \eqref{eq: approximation by step function}.

Therefore, we may now write 
\begin{equation}
\label{eq: near part}
    \int_{|Z|\le R}|a(x_0+Z/N)-a_0|K(Z)\,dZ=\sum_{m=1}^M c_m \int_{I_m}|a(x_0+Z/N)-a_0|\,dZ+\mathcal{O}(\eps).
\end{equation}
We show now that each integral in \eqref{eq: near part} goes to zero as $N\to\infty$. Fix $m=1,\ldots,M$ and apply the change of variables $z=x_0+Z/N$, so that
\[
    \int_{I_m} |a(x_0+Z/N)-a_0|\,dZ
=
N\int_{x_0+I_m/N}|a(z)-a_0|\,dz.
\]
The interval $x_0+I_m/N$ has length $|I_m|/N$ and lies inside $[x_0-R/N, x_0+R/N]$.
Therefore,
\[
\begin{split}
    N\int_{x_0+I_m/N}|a(z)-a_0|\,dz
&\le
N\int_{x_0-R/N}^{x_0+R/N}|a(z)-a_0|\,dz\\
&=2R \cdot
\frac{1}{2R/N}
\int_{x_0-R/N}^{x_0+R/N}|a(z)-a_0|\,dz.
\end{split}
\]
Since $x_0\in W_0$ is a Lebesgue point of $a$, we have
\[
    \frac{1}{2R/N}
\int_{x_0-R/N}^{x_0+R/N}|a(z)-a_0|\,dz\to 0\quad \text{as }N\to\infty.
\]
Since there are finitely many pieces in \eqref{eq: near part}, we get
\[
    \limsup_{N\to\infty}\int_{|Z|\le R}|a(x_0+Z/N)-a_0|K(Z)\,dZ=\mathcal{O}(\eps).
\]
Therefore \eqref{eq: model tail estimate} ensures that
\[
    \limsup_{N\to\infty}\int_{\mathbb R}|a(z)-a_0|K_N(z)\,dz=\mathcal{O}(\eps).
\]
Finally let $\varepsilon \to 0$ to establish \eqref{eq: Lebesgue point argument}.

Consequently, \eqref{eq: estimate bilinear form} yields
\[
    B_{\gamma_j}(\psi_N,\psi_N)\to a_j(x_0)^2=\gamma_j(x_0)\quad\text{as }N\to\infty
\]
and hence proves the asserted convergence \eqref{eq:convergence-psiN-energy}.

Now, the decomposition \eqref{eq:DN-decomp} and the equality of DN maps \eqref{eq:equal-DN-ext-1d} demonstrate that
\[
    \gamma_1(x_0)=\lim_{N\to\infty}\langle\Lambda_{\gamma_1}\psi_N,\psi_N\rangle
    =
    \lim_{N\to\infty}\langle\Lambda_{\gamma_2}\psi_N,\psi_N\rangle =\gamma_2(x_0).
\]
Since almost every point of $W$ is a common Lebesgue point of $a_1,a_2$, the assertion
follows.
\end{proof}

\subsection{Equality of weighted solutions}

In this section, we show that the solutions to the two equations coincide, when they are weighted with the corresponding conductivity (see also \cite[Lemma~3.14]{RZ2022LowReg}). 

\begin{lemma}[Equality of weighted solutions]
\label{lemma: equality of weighted solutions}
Assume that $a_1=a_2$ a.e.~in $W$. Let $V\Subset W$ be a nonempty open
interval and assume that the exterior value $f\in C_c^\infty(W)$ vanishes in a neighborhood of $V$. Suppose that
\begin{equation}
\label{eq: equality of DNs weighted}
    \Lambda_{\gamma_1}f |_W=\Lambda_{\gamma_2}f|_W.
\end{equation}
For $j=1,2$, let $u_j^f\in H^{1/2}(\R)$ be the unique solution to the Dirichlet problem
\begin{equation}
\begin{cases}
\label{eq:Dirichlet-frac-cond-1d}
    L_{\gamma_j} u=0 &\text{in }\Omega,\\
    u=f &\text{in }\Omega_e,
\end{cases}
\end{equation}
then
\[
    a_1u_1^f=a_2u_2^f
    \qquad\text{a.e. in }\mathbb R .
\]
\end{lemma}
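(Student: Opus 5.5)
We set $h:=a_1u_1^f-a_2u_2^f$ and aim to show that $h\in L^2(\R)$, $h=0$ in $V$, and $(-\Delta)^{1/2}h=0$ in $V$ in the distributional sense. Theorem~\ref{thm:UCP-energy-spaces} with $r=0$ then gives $h\equiv0$. Since $a_j\in L^\infty$ and $u_j^f\in H^{1/2}(\R)\subset L^2(\R)$, we have $h\in L^2(\R)$. Moreover, $V\subset W\subset\Omega_e$, so $u_j^f=f=0$ a.e. in $V$, and hence $h=0$ in $V$. It therefore remains to identify $(-\Delta)^{1/2}h$ on $V$.

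Fix $\varphi\in C_c^\infty(V)$. Since $\varphi\in C_c^\infty(W)$, hypothesis \eqref{eq: equality of DNs weighted} and the definition \eqref{eq: DN map} give $B_{\gamma_1}(u_1^f,\varphi)=B_{\gamma_2}(u_2^f,\varphi)$. We next expand each form using the kernel representation. The supports of $u_j^f$ and $\varphi$ are separated, because $u_j^f$ vanishes on a neighborhood $V'$ of $V$ and $\supp\varphi\subset V$. Expanding $(u(x)-u(y))(\varphi(x)-\varphi(y))$, the diagonal products $u(x)\varphi(x)$ and $u(y)\varphi(y)$ vanish identically. The two cross terms are equal by symmetry, and each is absolutely integrable since $|x-y|\ge \dist(\supp\varphi,\R\setminus V')>0$ on their support. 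This gives
\[
    B_{\gamma_j}(u_j^f,\varphi)=-c_{1,1/2}\int_{\R}\int_{\R}\frac{a_j(x)u_j^f(x)\,a_j(y)\varphi(y)}{|x-y|^{2}}\,dx\,dy .
\]
On $\supp\varphi\subset V\subset W$ we have $a_1=a_2=:a$. Subtracting the two identities therefore yields
\[
    \int_{\R}a(y)\varphi(y)\left(\int_{\R}\frac{h(x)}{|x-y|^2}\,dx\right)dy=0 \qquad\text{for all }\varphi\in C_c^\infty(V).
\]
Since $a\ge\lambda_0^{1/2}>0$ is bounded, the function $\varphi/a$ ranges over a dense subclass of $L^2$ functions supported in compact subsets of $V$. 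Approximating $\phi/a$ by smooth functions, or equivalently noting that $y\mapsto\int h(x)|x-y|^{-2}dx$ is smooth on $V$, we conclude that $\int_{\R}h(x)|x-y|^{-2}\,dx=0$ for a.e. $y\in V$, first locally and then on all of $V$.

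The final step, which I expect to be the main technical point, is to convert this into $(-\Delta)^{1/2}h=0$ in $V$ distributionally. Take $\phi\in C_c^\infty(V)$. Then
\[
    \langle(-\Delta)^{1/2}h,\phi\rangle=\int h\,(-\Delta)^{1/2}\phi\,dx .
\]
Split the integral into $x\in V$, where $h=0$, and $x\notin V$, where $\phi(x)=0$ and the singular integral reduces to $(-\Delta)^{1/2}\phi(x)=-c_{1,1/2}\int\phi(y)|x-y|^{-2}dy$ with no principal value. Fubini is justified because $h\in L^2$ and the kernel is bounded and decays like $|x|^{-2}$ for $x$ outside a neighborhood of $\supp\phi$. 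Only those $x$ matter, since $h=0$ on $V$. Thus $\langle(-\Delta)^{1/2}h,\phi\rangle=-c_{1,1/2}\int\phi(y)\int h(x)|x-y|^{-2}dx\,dy=0$. Applying Theorem~\ref{thm:UCP-energy-spaces} yields $h=0$ in $\R$, that is, $a_1u_1^f=a_2u_2^f$ a.e.

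Some care is needed with the a.e. issues: $h$ vanishes only a.e. on $V$, and the kernel is singular near $\partial V$ when $\supp\phi$ is close to it. Both are handled by the positive distance from $\supp\phi$ to $\R\setminus V$.
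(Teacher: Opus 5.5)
Your proof is correct and follows the paper's argument: the same function $h$, the same integral identity from testing the DN equality with $\varphi\in C_c^\infty(V)$, and the same appeal to the $L^2$ unique continuation property (Theorem~\ref{thm:UCP-energy-spaces}). The differences are minor. You expand the bilinear form using the separation of supports rather than the paper's four-region decomposition of $\R^2$. You also identify $(-\Delta)^{1/2}h$ on $V$ with the nonsingular integral by a direct duality-and-Fubini computation instead of citing \cite[Proposition~2.1.4]{silvestrephdthesis}, which makes that step self-contained.
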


\begin{proof}
Let $h\in L^2(\R)$ be given by
\[
    h\vcentcolon =a_1u_1^f-a_2u_2^f.
\]
Since $u_j^f=f$ in $\Omega_e$, $f=0$ near $V$, and $a_1=a_2$ on
$W\supset V$, we have
\[
    h=0 \qquad\text{in }V.
\]
Next, let us recall that $(-\Delta)^{1/2} u=\ifourier( |\xi| \hat{u})$ may be defined for any $u\in L_{1/2}$ in a distributional sense and, in particular, for $h\in L^2(\R)$ (see the discussion following \eqref{eq: multiplier frac Lap}). Moreover, since $h=0$ in $V$ and $L^2(\R^n)\subset L_{1/2}$, \cite[Proposition~2.1.4]{silvestrephdthesis} demonstrates that $(-\Delta)^{1/2}$ is a continuous function in $V$ and it is given by the integral
\begin{equation}
\label{eq:half-lap-1d}
    (-\Delta)^{1/2}h(x)=c_{1,1/2}\int_{\R}\frac{h(x)-h(y)}{|x-y|^2}\,dy=- c_{1,1/2}\int_{\R\setminus V}\frac{h(y)}{|x-y|^2}\,dy
\end{equation}
for $x\in V$. Consequently, $(-\Delta)^{1/2} h\in L^1_{\text{loc}}(V)$.

Next, let $\varphi\in C_c^\infty(V)$. The equality of the DN maps, \eqref{eq: equality of DNs weighted}, gives
\begin{equation}
    B_{\gamma_1}(u_1^f,\varphi)=B_{\gamma_2}(u_2^f,\varphi).
\end{equation}
Since $u_j^f=0$ in $V$, $\supp\varphi\subset V$, and
$a_1=a_2=:a$ a.e.~in $V$, the kernel formula \eqref{eq: fractional bilinear form} reduces this identity to
\begin{equation}
\label{eq: integral identity}
    \int_V a(x)\varphi(x)
    \int_{\mathbb R\setminus V}\frac{h(y)}{|x-y|^2}\,dy\,dx=0.
\end{equation}
Indeed,
\begin{equation}
\label{eq: difference bilinear forms}
\begin{split}
    0&= \frac{2}{c_{1,1/2}}(B_{\gamma_1}(u_1^f,\varphi)-B_{\gamma_2}(u_2^f,\varphi))\\
    & =\int_{\R^2}\left\{a_1(x)a_1(y)(u_1^f(x)-u_1^f(y))-a_2(x)a_2(y)(u_2^f(x)-u_2^f(y))\right\}\\
    &\qquad \quad\times \frac{\varphi(x)-\varphi(y)}{|x-y|^2}\,dxdy.
\end{split}
\end{equation}
Now, we decompose $\R^2$ as
\[
    \R^2=(V\times V)\cup (V^c\times V^c)\cup (V\times V^c)\cup (V^c\times V).
\]
On the one hand, since $a\vcentcolon =a_1=a_2$ a.e.~in $V$, $u_1^f=u_2^f=f$ in $W$, and $f=0$ in $V$, the integrand of \eqref{eq: difference bilinear forms} vanishes in $V\times V$, and on the other hand it vanishes in $V^c\times V^c$, because $\varphi\in C_c^{\infty}(V)$. Thus, using that $a\vcentcolon = a_1=a_2$ in $V$, we get
\[
\begin{split}
    0&=\int_{V} a(x)\int_{V^c}\left(\left\{a_1(y)(u_1^f(x)-u_1^f(y))-a_2(y)(u_2^f(x)-u_2^f(y))\right\}\frac{\varphi(x)}{|x-y|^2}\,dy\right)dx\\
    &\quad -\int_{V}a(y)\int_{V^c}\left(\left\{a_1(x)(u_1^f(x)-u_1^f(y))-a_2(x)(u_2^f(x)-u_2^f(y))\right\}\frac{\varphi(y)}{|x-y|^2}\,dx\right)dy\\
    &=2\int_{V} a(x)\int_{V^c}\left(\left\{a_1(y)(u_1^f(x)-u_1^f(y))-a_2(y)(u_2^f(x)-u_2^f(y))\right\}\frac{\varphi(x)}{|x-y|^2}\,dy\right)dx.
\end{split}
\]
Since $u_j^f=f=0$ in $V$, for $x\in V$ and $y\in V^c$, we have
\[
\begin{aligned}
    &a(x)\left\{
        a_1(y)(u_1^f(x)-u_1^f(y))
        -a_2(y)(u_2^f(x)-u_2^f(y))
    \right\}\\
    &\qquad =
    -a(x)\bigl(a_1(y)u_1^f(y)-a_2(y)u_2^f(y)\bigr)\\
    &\qquad =
    -a(x)h(y).
\end{aligned}
\]
Thus, we obtain
\[
\begin{split}
    0&=\int_V a(x)\varphi(x)
    \int_{V^c}\frac{h(y)}{|x-y|^2}\,dy\,dx,
\end{split}
\]
which proves \eqref{eq: integral identity}. 

Since $(-\Delta)^{1/2} h \in L^1_{\text{loc}}(V)$, $\varphi\in C_c^{\infty}(V)$, and $0<\lambda_0^{1/2}\leq a\in L^{\infty}(\R)$, it follows from \eqref{eq:half-lap-1d} and \eqref{eq: integral identity} that
\[
    (-\Delta)^{1/2}h(x) = -c_{1,1/2}\int_{\mathbb R}\frac{h(y)}{|x-y|^2}\,dy=0
    \quad\text{a.e. in }V,
\]
and hence in $\distr(V)$. Together with $h=0$ a.e.~in $V$, this is precisely
\[
    (-\Delta)^{1/2}h=0\quad\text{in }V,
    \qquad
    h=0\quad\text{in }V.
\]
Since $h\in L^2(\mathbb R)$, the usual unique continuation property for the
half-Laplacian gives $h=0$ in $\mathbb R$; see Theorem~\ref{thm:UCP-energy-spaces}.
\end{proof}

\subsection{Half-Laplacian UCP for bounded functions}

Our next result shows that the UCP for the half-Laplacian can be extended to $L^{\infty}$-functions.

\begin{lemma}[Half-Laplacian UCP for bounded functions]
\label{lemma: bounded data UCP}
Let $I\subset\mathbb R$ be a nonempty open interval and let
$g\in L^\infty(\mathbb R)$. Suppose
\[
    g=0\quad\text{a.e. in }I,
    \qquad
    (-\Delta)^{1/2}g=0\quad\text{in }\distr(I).
\]
Then $g=0$ a.e. in $\mathbb R$.
\end{lemma}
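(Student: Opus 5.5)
We plan to prove the lemma through the Poisson (harmonic) extension of $g$ to the upper half-plane $\R^2_+=\{(x,y):y>0\}$. Set
\[
    U(x,y)=(P_y*g)(x)=\frac{1}{\pi}\int_{\R}\frac{y\,g(t)}{(x-t)^2+y^2}\,dt,
\]
which is well defined and bounded by $\|g\|_{L^\infty(\R)}$, and harmonic in $\R^2_+$.

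\textbf{Behaviour near $I$.} We study $U$ near the boundary interval $I$. Fix a compact subinterval $J\Subset I$ and let $d=\dist(J,\R\setminus I)>0$. Since $g=0$ a.e. in $I$, for $x\in J$ the integral only involves $|x-t|\ge d$. There the kernel $y/((x-t)^2+y^2)$ and all its $x,y$-derivatives are smooth up to $y=0$, with integrable bounds $\lesssim (1+|t|)^{-2}$ against $g\in L^\infty$. Hence $U$ extends smoothly (indeed real-analytically in $x$) up to $J\times\{0\}$, with $U(x,0)=0$ for $x\in J$. Differentiating under the integral gives
\[
    -\partial_yU(x,0)=-\frac1\pi\int_{\R\setminus I}\frac{g(t)}{(x-t)^2}\,dt .
\]
This coincides, up to the constant $c_{1,1/2}=1/\pi$, with the formula \eqref{eq:half-lap-1d} for $(-\Delta)^{1/2}g(x)$ on $I$. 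The representation is justified as in the proof of Lemma~\ref{lemma: equality of weighted solutions} via \cite[Proposition~2.1.4]{silvestrephdthesis}, or directly by pairing with $\varphi\in C_c^\infty(J)$ and using the definition $\langle(-\Delta)^{1/2}g,\varphi\rangle=\int g\,(-\Delta)^{1/2}\varphi$ together with Fubini. The hypothesis $(-\Delta)^{1/2}g=0$ in $\distr(I)$ then gives $\partial_yU(\cdot,0)=0$ on $J$, since this is a continuous function vanishing distributionally.

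\textbf{Continuation.} We extend $U$ by odd reflection, $\tilde U(x,y)=-U(x,-y)$ for $y<0$. Because $U$ is $C^1$ up to $J\times\{0\}$ with $U=0$ there, $\tilde U$ is harmonic in $J\times(-\infty,\infty)$: it is continuous across the segment, harmonic on both sides, and the reflection principle (or a weak formulation check) applies. Moreover $\tilde U$ and $\partial_y\tilde U$ both vanish on $J\times\{0\}$. By Cauchy--Kovalevskaya uniqueness, or more concretely because $\partial_x\tilde U-i\partial_y\tilde U$ is holomorphic and vanishes on a segment, $\tilde U\equiv0$ near $J\times\{0\}$. Connectedness of $\R^2_+$ and the identity theorem for real-analytic harmonic functions then give $U\equiv0$ in $\R^2_+$.

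\textbf{Recovering $g$.} Finally, $P_y*g\to g$ as $y\downarrow0$ at every Lebesgue point of $g$, hence a.e. (and also in $L^1_{\loc}$), so $g=0$ a.e. in $\R$. The main obstacle is the second step: rigorously identifying the distributional $(-\Delta)^{1/2}g$ on $I$ with the boundary Neumann trace $-\partial_yU(\cdot,0)$ for merely bounded $g$. To handle it we would compute, for $\varphi\in C_c^\infty(J)$,
\[
    \int g\,(-\Delta)^{1/2}\varphi\,dt=-\frac1\pi\int_J\varphi(x)\int_{\R\setminus I}\frac{g(t)}{(x-t)^2}\,dt\,dx .
\]
This uses $g=0$ on $I$ and the explicit off-support formula $(-\Delta)^{1/2}\varphi(t)=-\frac1\pi\int\varphi(x)|x-t|^{-2}\,dx$ for $t\notin I$; the Fubini step is legitimate because $|x-t|\ge d$ there and $g$ is bounded.
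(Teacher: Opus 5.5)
Your proposal is correct and follows essentially the same route as the paper: the Poisson extension, smoothness up to the boundary over a compact subinterval of $I$ because the kernel stays away from $\supp g$, vanishing Cauchy data obtained by identifying $-\partial_y U(\cdot,0)$ with $(-\Delta)^{1/2}g$, harmonic unique continuation in $\mathbb R^2_+$, and recovery of $g$ from the boundary limit. The differences are minor. You continue across the segment by odd Schwarz reflection and the holomorphic function $\partial_x\tilde U-i\partial_y\tilde U$, whereas the paper shows that all derivatives vanish and uses the zero extension together with real analyticity. You also justify the identification of the distributional half-Laplacian by a direct Fubini pairing instead of citing Silvestre's Proposition~2.1.4.
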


\begin{proof}
Let
\begin{equation}
\label{eq: Poisson extension}
    G(x,y)=P_y*g(x),\qquad y>0,
\end{equation}
be the Poisson extension of $g$ to the upper half-plane, where the Poisson kernel $P_y$ is given by
\[
    P_y(x)\vcentcolon =\frac{1}{\pi}\frac{y}{x^2+y^2}
\]
for $x\in \R$, $y>0$. By \cite[Chapters~1~\&~3]{stein2016singular}, the functions $P_y$ and $G$ have the following properties:
\begin{enumerate}[(i)]
    \item\label{item: properties of P} For each $y>0$ and $1\leq p\leq \infty$, $P_y\in L^p(\R)$, and its Fourier transform is 
    \begin{equation}
    \label{eq: Fourier transform of Py}
        \widehat{P_y}(\xi)=e^{-|\xi|y},\quad y>0.
    \end{equation}
    \item\label{item: harmonic G} $P$ and $G$ are harmonic functions in $\R_+^2$.
    \item\label{item: boundedness of G} $G\in L^{\infty}(\R^2_+)$ with 
    \begin{equation}
        \|G\|_{L^{\infty}(\R^2_+)}\leq \|g\|_{L^{\infty}(\R)}.
    \end{equation}
    \item\label{item: convergence} For a.e. $x\in\R$, $G(x,y)\to g(x)$ as $y\downarrow 0$.
\end{enumerate} 
Now, let $I'\Subset I$ be a nonempty open interval. Since $g$ vanishes in $I$, the convolution $G$ is given by
\begin{equation}
\label{eq: representation of G}
    G(x,y)=\frac{1}{\pi}\int_{\R\setminus I}\frac{y}{(x-z)^2+y^2}g(z)\,dz
\end{equation}
for $x\in I'$, $y>0$. Since $|x-z|\geq \delta>0$ for all $x\in I'$ and $z\in \R\setminus I$, we infer that $G$ is smooth up to the boundary portion
$I'\times\{0\}\subset \partial \R^2_+$, that is, $G\in C^{\infty}(I'\times [0,\kappa))$, $\kappa>0$. 

To this end, let $\delta\vcentcolon=
    \operatorname{dist}(\overline{I'},\mathbb R\setminus I)>0$
and fix $\kappa>0$. For every pair of integers $k,\ell\geq0$,
there exists $C_{k,\ell,I',\delta,\kappa}>0$ such that
\[
    \left|
    \partial_x^k\partial_y^\ell
    \frac{y}{(x-z)^2+y^2}
    \right|
    \leq
    \frac{C_{k,\ell,I',\delta,\kappa}}{1+|z|^2}
\]
for $x\in\overline{I'}$, $|y|\leq\kappa$, and
$z\in\mathbb R\setminus I$.
Indeed, the identity
\[
    \frac{y}{(x-z)^2+y^2}
    =
    \frac{1}{2i}
    \left(
        \frac{1}{x-z-iy}-\frac{1}{x-z+iy}
    \right)
\]
shows that, for $k+\ell\geq1$,
\[
    \left|
    \partial_x^k\partial_y^\ell
    \frac{y}{(x-z)^2+y^2}
    \right|
    \leq
    C_{k,\ell}
    \bigl((x-z)^2+y^2\bigr)^{-(k+\ell+1)/2}.
\]
Since $|x-z|\geq\delta$, these derivatives are uniformly bounded when $z$ remains bounded. Since $I'$ is bounded, for sufficiently large $|z|$ we also have $|x-z|\geq |z|/2$, uniformly in $x\in\overline{I'}$. Thus the derivatives decay at least as $|z|^{-2}$. For $k=\ell=0$, the same decay follows from
\[
    \left|\frac{y}{(x-z)^2+y^2}\right|
    \leq \frac{\kappa}{|x-z|^2}.
\]

As $g\in L^\infty(\mathbb R)$, these bounds provide an integrable majorant for every differentiated integrand. Consequently, differentiation under the integral sign shows that
\[
    F(x,y)\vcentcolon=
    \frac{1}{\pi}\int_{\mathbb R\setminus I}
    \frac{y}{(x-z)^2+y^2}g(z)\,dz,
    \qquad (x,y)\in I'\times(-\kappa,\kappa),
\]
belongs to $C^\infty(I'\times(-\kappa,\kappa))$. For $y>0$, it agrees with $G$, and $F(x,0)=0$. Therefore $G$ is smooth up to $I'\times\{0\}$ and $G(x,0)=0$ for every $x\in I'$.

Using the same reasoning as in the proof of Lemma~\ref{lemma: equality of weighted solutions}, we conclude from the conditions $g\in L^{\infty}(\R)$ and $g=0$ in $I$ that $(-\Delta)^{1/2}g$ is continuous in $I$ and, moreover, $(-\Delta)^{1/2}g(x)$, $x\in I$, can be computed via the singular integral representation of $(-\Delta)^{1/2}$ (with $c_{1,1/2}=1/\pi$).
Additionally, $G\in C^{\infty}(I'\times [0,\kappa))$, for any $\kappa>0$, and the identity $g=(-\Delta)^{1/2}g=0$ in $I$ ensure that
\[
\begin{split}
    -\partial_yG(x,0)&=-\left.\partial_y\right|_{y=0}G(x,y)\\
    &=-\frac{1}{\pi}\left.\partial_y\right|_{y=0}\int_{\R\setminus I}\frac{y}{(x-z)^2+y^2}g(z)\,dz\\
    &=-\frac{1}{\pi}\int_{\R\setminus I}\frac{1}{(x-z)^2}g(z)\,dz\\
    &=\frac{1}{\pi}\int_{\R}\frac{g(x)-g(z)}{(x-z)^2}\,dz\\
    &=(-\Delta)^{1/2} g(x)=0
\end{split}
\]
for $x\in I'$. Thus $G$ has vanishing Cauchy data on $I'\times\{0\}$.

Since $G$ is harmonic \ref{item: harmonic G} and smooth up to $I'\times\{0\}$,
the identity $\partial_y^2G=-\partial_x^2G$ gives, for all
integers $k,m\geq0$,
\[
\begin{aligned}
    \partial_x^k\partial_y^{2m}G(x,0)
    &=(-1)^m\partial_x^{k+2m}G(x,0)=0,\\
    \partial_x^k\partial_y^{2m+1}G(x,0)
    &=(-1)^m\partial_x^{k+2m}\partial_yG(x,0)=0.
\end{aligned}
\]
Thus all derivatives of $G$ vanish on $I'\times\{0\}$.
Consequently, the zero extension
\[
    \widetilde G(x,y)\vcentcolon=
    \begin{cases}
        G(x,y),& x\in I',\ 0\leq y<\kappa,\\
        0,& x\in I',\ -\kappa<y<0
    \end{cases}
\]
belongs to $C^\infty(I'\times(-\kappa,\kappa))$ and is harmonic
there. Since this rectangle is connected and $\widetilde G$
vanishes on its lower half, real analyticity implies that
$\widetilde G=0$ throughout the rectangle.
In particular, $G$ vanishes on the nonempty open set
$I'\times(0,\kappa)$.

Applying unique continuation for harmonic functions in the
connected upper half-plane, we obtain
$G\equiv0$ in $\mathbb R^2_+$. Property~\ref{item: convergence}
then yields $g=0$ almost everywhere in $\mathbb R$.
\end{proof}

\subsection{Uniqueness of \texorpdfstring{$L^\infty$}{bounded} conductivities}

In this section, we establish the unique determination of bounded conductivities in the case $n=1$ and $s=1/2$.

\begin{theorem}[Uniqueness for $L^\infty$ conductivities when $n=1$, $s=1/2$]
\label{thm:uniqueness-1d}
Let $\Omega\subset\mathbb R$ be open bounded set, and assume that
$W\subset\Omega_e$ is a nonempty open set. Let $\gamma_1,\gamma_2\in L^\infty(\mathbb R)$ be uniformly elliptic. If
\begin{equation}
\label{eq: equality DN maps uniqueness n=1, s=1/2}
    \Lambda_{\gamma_1}f|_W=\Lambda_{\gamma_2}f|_W
    \qquad\text{for all }f\in C_c^\infty(W),
\end{equation}
then
\[
    \gamma_1=\gamma_2 \qquad\text{a.e. in }\mathbb R .
\]
\end{theorem}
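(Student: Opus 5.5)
The plan is to chain the three lemmas of this section, following the outline in the introduction.

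\emph{Step 1.} Lemma~\ref{lemma: exterior determination for n=1 s=1/2} gives $\gamma_1=\gamma_2$ a.e.\ in $W$, hence $a_1=a_2$ a.e.\ in $W$.

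\emph{Step 2.} Shrinking $W$ if necessary, I fix two nonempty open intervals $U,V\Subset W$ with $\overline U\cap\overline V=\emptyset$. I choose $f\in C_c^\infty(W)$ with $f\geq c_f>0$ on $U$ and $f=0$ in a neighborhood of $\overline V$. Lemma~\ref{lemma: equality of weighted solutions}, applied with this $V$, yields
\[
    w:=a_1u_1^f=a_2u_2^f\quad\text{a.e. in }\mathbb R.
\]
On $U\subset\Omega_e$ we have $u_j^f=f$, so $w=a_1f=a_2f$ there, consistent with Step~1.

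\emph{Step 3.} Here I derive an equation for $\delta_a:=a_1-a_2\in L^\infty(\mathbb R)$, which vanishes a.e.\ on $U$. Take $\varphi\in C_c^\infty(U)$. The hypothesis \eqref{eq: equality DN maps uniqueness n=1, s=1/2} gives $B_{\gamma_1}(u_1^f,\varphi)=B_{\gamma_2}(u_2^f,\varphi)$. I expand both forms with the kernel formula and use
\[
    a_j(x)a_j(y)\bigl(u_j^f(x)-u_j^f(y)\bigr)=a_j(y)\,w(x)-a_j(x)\,w(y).
\]
The terms containing $w(y)a_j(x)$ combine into $w(y)\delta_a(x)$, and the terms containing $w(x)a_j(y)$ combine into $w(x)\delta_a(y)$. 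The difference of the bilinear forms therefore becomes
\[
    \iint \bigl(w(x)\delta_a(y)-w(y)\delta_a(x)\bigr)\frac{\varphi(x)-\varphi(y)}{|x-y|^2}\,dx\,dy=0 .
\]
To evaluate this, I split $\mathbb R^2$ into $U\times U$, $U\times U^c$, $U^c\times U$ and $U^c\times U^c$, as in the proof of Lemma~\ref{lemma: equality of weighted solutions}.
\begin{itemize}
    \item On $U^c\times U^c$ the integrand vanishes because $\varphi$ does.
    \item On $U\times U$ it vanishes because $\delta_a=0$ there.
    \item On the mixed regions, with $x\in U$ and $y\in U^c$, the factor $\delta_a(x)$ is zero, so only $w(x)\delta_a(y)\varphi(x)|x-y|^{-2}$ survives.
\end{itemize}
By symmetry the two mixed regions contribute equally, and I obtain
\[
    \int_U w(x)\varphi(x)\int_{\mathbb R\setminus U}\frac{\delta_a(y)}{|x-y|^2}\,dy\,dx=0 .
\]
As in \eqref{eq:half-lap-1d}, the inner integral equals $-\pi\,(-\Delta)^{1/2}\delta_a(x)$ and is continuous on $U$, since $\delta_a\in L^\infty$ vanishes on $U$. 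Because $w=a_1f\geq\lambda_0^{1/2}c_f>0$ on $U$ and $\varphi$ is arbitrary, I conclude that $(-\Delta)^{1/2}\delta_a=0$ on $U$, pointwise and hence in $\distr(U)$.

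\emph{Step 4.} Lemma~\ref{lemma: bounded data UCP}, applied to $g=\delta_a$ on a subinterval of $U$, gives $\delta_a=0$ a.e.\ in $\mathbb R$. Hence $a_1=a_2$ and $\gamma_1=a_1^2=a_2^2=\gamma_2$ a.e.\ in $\mathbb R$.

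The main obstacle is Step~3: justifying the algebraic rearrangement of the bilinear forms at the level of absolutely convergent integrals. The individual pieces such as $w(x)\delta_a(y)/|x-y|^2$ are not separately integrable near the diagonal. I would handle this by performing the splitting into regions before rearranging. On the mixed regions, $x\in\operatorname{supp}\varphi\Subset U$ stays a positive distance from $U^c$, so the kernel is bounded there, and $w$, $\delta_a$ are bounded with $\varphi$ compactly supported. On $U\times U$ the original integrand already vanishes. A last point to check is that $f\in C_c^\infty(W)$ with these sign and vanishing properties exists, which is immediate since $\overline U\cap\overline V=\emptyset$.
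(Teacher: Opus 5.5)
Your proposal is correct and follows the paper's proof essentially step by step. The chain is the same: exterior determination, then Lemma~\ref{lemma: equality of weighted solutions} with disjoint intervals $U,V$ and $f\geq c_f$ on $U$, then the identity $a_j(x)a_j(y)(u_j^f(x)-u_j^f(y))=a_j(y)w(x)-a_j(x)w(y)$ reducing the DN equality to $\int_U w(x)\varphi(x)\int_{U^c}\delta_a(y)|x-y|^{-2}\,dy\,dx=0$, and finally Lemma~\ref{lemma: bounded data UCP}.

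Your integrability worry resolves more simply than you suggest. The combined integrand is the difference of the two absolutely integrable integrands of $B_{\gamma_1}(u_1^f,\varphi)$ and $B_{\gamma_2}(u_2^f,\varphi)$, so splitting it over the four regions is automatically legitimate. On the mixed region it is the $|x-y|^{-2}$ decay, not mere boundedness of the kernel, that gives integrability in $y$.
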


\begin{proof}
By exterior determination (Lemma~\ref{lemma: exterior determination for n=1 s=1/2}), we know that $\gamma_1=\gamma_2$ a.e.~in
$W$. Next, choose nonempty open intervals $U,V\Subset W$ with $\overline U\cap\overline V=\emptyset$, and let the exterior condition
$f\in C_c^\infty(W)$ satisfy
\[
    f\geq c_f>0 \quad\text{in }U,
    \qquad
    f=0 \quad\text{in a neighborhood of }\overline V.
\]
By Lemma~\ref{lemma: equality of weighted solutions}, we have
\[
    w\vcentcolon =a_1u_1^f=a_2 u_2^f
    \qquad\text{in }\mathbb R.
\]
Thus,
\[
    a_j(x)a_j(y)(u_j^f(x)-u_j^f(y))
    =
    a_j(y)w(x)-a_j(x)w(y)
\]
for a.e.~$x,y\in\R$ and all $j=1,2$. Next, let us define the function $\delta_a\in L^\infty (\R)$ by
\[
    \delta_a(x)\vcentcolon =a_1(x)-a_2(x),\quad x\in\R.
\]
Hence, for any $\varphi\in C_c^\infty (U)$, \eqref{eq: equality DN maps uniqueness n=1, s=1/2} yields
\begin{equation}
\label{eq: difference DN maps uniqueness n=1 s=1/2}
\begin{split}
    0&=\frac{2}{c_{1,1/2}}\langle (\Lambda_{\gamma_1}-\Lambda_{\gamma_2})f,\varphi\rangle\\
    &=\frac{2}{c_{1,1/2}}(B_{\gamma_1}(u_1^f,\varphi)-B_{\gamma_2}(u_2^f,\varphi))\\
    &=\int_{\mathbb R^2}
    \frac{(\delta_a(y)w(x)-\delta_a(x)w(y))(\varphi(x)-\varphi(y))}
    {|x-y|^2}\,dx\,dy,
\end{split}
\end{equation}
Since $\gamma_1=\gamma_2$ a.e.~in $W$, we have in particular $\delta_a =0$ a.e.~in $U$. Thus $\supp\,\varphi\subset U$, the decomposition
\[
    \R^2=(U\times U)\cup (U^c\times U^c)\cup (U\times U^c)\cup (U^c\times U),
\]
and symmetry in $x$ and $y$ of the obtained integrals reduce \eqref{eq: difference DN maps uniqueness n=1 s=1/2} to
\[
    2\int_U \varphi(x)w(x)
    \int_{U^c}\frac{\delta_a (y)}{|x-y|^2}\,dy\,dx=0.
\]
Indeed, if $(x,y)\in (U\times U)\cup (U^c\times U^c)$, then the integrand of \eqref{eq: difference DN maps uniqueness n=1 s=1/2} vanishes. Hence, the symmetry in $x$ and $y$, $\delta_a=0$ in $U$, and $\varphi=0$ in $U^c$ yield
\[
    \begin{split}
        &\int_{\mathbb R^2}
    \frac{(\delta_a(y)w(x)-\delta_a(x)w(y))(\varphi(x)-\varphi(y))}
    {|x-y|^2}\,dx\,dy\\
    &=2\int_{U}\int_{U^c}
    \frac{(\delta_a(y)w(x)-\delta_a(x)w(y))(\varphi(x)-\varphi(y))}
    {|x-y|^2}\,dy\,dx\\
    &=2\int_{U}\int_{U^c}
    \frac{\delta_a(y)w(x)}
    {|x-y|^2}\varphi(x)\,dy\,dx\\
    \end{split}
\]

Now, using $w\in L^2(\R)$, $a_1=a_2$ in $W$, and
\[
    w=a_1f=a_2f\geq\lambda_0^{1/2}c_f>0\quad\text{a.e. in }U,
\]
we deduce that
\[
    \int_{U^c}\frac{\delta_a (y)}{|x-y|^2}\,dy=0
    \qquad\text{for a.e. }x\in U,
\]
and hence in $\distr (U)$. Here, we used again that $\delta_a =0$ in $U$ and $\delta_a\in L^{\infty}(\R)\subset L_{1/2}$ implies that $(-\Delta)^{1/2}\delta_a$ is a continuous function in $U$ and can be computed by
\[
    (-\Delta)^{1/2}\delta_a (x)=c_{1,1/2}\int_{\mathbb R}\frac{\delta_a(x)-\delta_a (y)}{|x-y|^2}\,dy=-c_{1,1/2}\int_{U^c}\frac{\delta_a (y)}{|x-y|^2}\,dy \in L^1_{\text{loc}}(U);
\]
see \cite[Proposition~2.1.4]{silvestrephdthesis}. Since $\delta_a =0$ a.e.~in $U$, this means
\[
    (-\Delta)^{1/2}\delta_a =0\quad\text{in }U,
    \qquad
    \delta_a =0\quad\text{in }U.
\]
The bounded-data UCP (Lemma~\ref{lemma: bounded data UCP}) implies $\delta_a =0$ a.e. in $\mathbb R$. Thus
$a_1=a_2$ a.e. in $\mathbb R$, and consequently
$\gamma_1=\gamma_2$ a.e. in $\mathbb R$.
\end{proof}

\section{The general case: \texorpdfstring{$n\geq1$, $0<s<1$}{general}}
\label{sec: higher-dimensional generalization}

In this section, we extend the model argument of
Section~\ref{sec: 1d half order} to all dimensions $n\geq1$
and all fractional orders $0<s<1$. The main new ingredient
is a unique continuation principle for $(-\Delta)^s$ with
locally bounded data satisfying the weighted integrability
condition $g\in L_s$. We establish this result in
Section~\ref{subsec:bounded-UCP-nd} using the
Caffarelli--Silvestre extension and boundary unique continuation
for the associated degenerate elliptic equation.

Section~\ref{subsec:uniqueness-nd} proves uniqueness for the inverse problem in three stages. We first establish exterior determination and then obtain equality of weighted solutions by choosing disjointly supported exterior data. Finally, we apply the unique continuation principle to the
difference of the square roots of the conductivities to conclude that $\gamma_1=\gamma_2$ almost everywhere in $\mathbb R^n$.

Throughout this section, $\Omega\subset\R^n$ is a bounded open set, $W\subset \Omega_e$ a nonempty open set, and, for $j=1,2$ we have given two conductivities
\[
    \gamma_j\in L^{\infty}(\R^n),\quad 0<\lambda_0\leq \gamma_j\leq \lambda_1<\infty,
\]
and their square roots are denoted by
\[
    a_j(x)\vcentcolon = \gamma_j^{1/2}(x),\quad x\in\R^n.
\]

\subsection{Bounded-data UCP for the fractional Laplacian}
\label{subsec:bounded-UCP-nd}

The following theorem extends
Lemma~\ref{lemma: bounded data UCP} to locally integrable functions
in arbitrary dimensions and for all $0<s<1$, which do not grow too rapidly at infinity. More explicitly, global boundedness is replaced by the weighted integrability condition
\[
    g\in L_s
    \vcentcolon=
    \left\{
        u\in L^1_{\mathrm{loc}}(\mathbb R^n):
        \|u\|_{L_s}\vcentcolon =\int_{\mathbb R^n}
        \frac{|u(x)|}{1+|x|^{n+2s}}\,dx<\infty
    \right\},
\]
which ensures that $(-\Delta)^s g$ is well-defined as a tempered
distribution by duality. In particular, since $L^p(\mathbb R^n)\subset L_s$ for every
$1\leq p\leq\infty$, the result applies to every $g\in L^p(\mathbb R^n)$, for every $1\leq p\leq\infty$.

\begin{theorem}[UCP of fractional Laplacian for $L_s$-functions]
\label{theorem:bounded-fractional-ucp}
Let $V\subset\mathbb R^n$ be a nonempty open set, and assume that $g\in L_s$. If $g$ satisfies
\[
    g=0 \quad\text{a.e.~in } V,
    \qquad
    (-\Delta)^s g=0 \quad\text{in } \distr(V),
\]
then $g=0$ a.e.~in $\mathbb R^n$.
\end{theorem}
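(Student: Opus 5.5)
We follow the strategy of Lemma~\ref{lemma: bounded data UCP}, replacing the harmonic extension by the Caffarelli--Silvestre extension. With $b\vcentcolon=1-2s\in(-1,1)$, set
\[
    U(x,y)=P^s_y*g(x),
    \qquad
    P^s_y(x)=C_{n,s}\frac{y^{2s}}{(|x|^2+y^2)^{(n+2s)/2}},
\]
for $(x,y)\in\R^{n+1}_+$. Since $P^s_y(x-z)\lesssim_{x,y}(1+|z|)^{-n-2s}$ locally uniformly in $(x,y)$, the condition $g\in L_s$ makes $U$ well defined and smooth in the open half-space, where it solves $\operatorname{div}(y^{b}\nabla U)=0$. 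Moreover, $(P^s_y)_{y>0}$ is an approximate identity. Splitting $g=g\chi_{B_R}+g\chi_{B_R^c}$, the first part converges in $L^1_{\rm loc}$, and the tail contributes $O(y^{2s})$ locally uniformly. Hence $U(\cdot,y)\to g$ in $L^1_{\rm loc}(\R^n)$ as $y\downarrow0$.

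\textbf{Local step near the vanishing set.} Fix a ball $B'\Subset V$ with $\delta=\dist(B',\R^n\setminus V)>0$. For $x\in B'$ we have
\[
    U(x,y)=C_{n,s}\,y^{2s}\int_{\R^n\setminus V}\frac{g(z)}{(|x-z|^2+y^2)^{(n+2s)/2}}\,dz
    =:y^{2s}\Phi(x,y^2),
\]
where $\Phi$ is smooth on $B'\times(-\delta^2/2,\infty)$. This follows by differentiating under the integral, with majorants $C(1+|z|)^{-n-2s}|g(z)|$ as in the model case. Consequently $U$ is bounded on $B'\times(0,\kappa)$, $U(x,0)=0$, and $y^{b}\nabla U\in L^2_{\rm loc}$ up to $B'\times\{0\}$, so $U\in H^1(B'\times(0,\kappa),y^{b})$ with zero trace. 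A direct computation gives
\[
    -\lim_{y\downarrow0}y^{b}\partial_yU(x,y)=-2sC_{n,s}\int_{\R^n\setminus V}\frac{g(z)}{|x-z|^{n+2s}}\,dz .
\]
For $g=0$ on $V$, this is a positive constant multiple of the pointwise value of $(-\Delta)^s g(x)$. The pointwise formula for $(-\Delta)^sg$ on $B'$ agrees with the distributional one, by pairing with $\varphi\in C_c^\infty(B')$ and applying Fubini, exactly as in Lemma~\ref{lemma: equality of weighted solutions}. The hypothesis $(-\Delta)^sg=0$ in $\distr(V)$ therefore gives that the weighted conormal derivative vanishes on $B'$.

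\textbf{Vanishing and propagation.} Next we show that $U\equiv0$ near $B'\times\{0\}$. Here the explicit structure helps. From $U=y^{2s}\Phi(x,y^2)$ we get
\[
    y^{b}\partial_yU\big|_{y=0}=2s\,\Phi(x,0),
\]
so the Neumann condition says $\Phi(\cdot,0)=0$. The equation $\operatorname{div}(y^b\nabla U)=0$ then becomes a Fuchsian ODE in $t=y^2$ for $\Phi$, with $\Delta_x$ acting as a coefficient. Taking $t$-derivatives recursively gives $\partial_t^k\Phi(\cdot,0)=c_k\Delta_x^k\Phi(\cdot,0)=0$ for all $k$.

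Vanishing to infinite order alone is not enough, because $\Phi$ is only smooth. We therefore use the standard boundary unique continuation for the degenerate equation, via Carleman estimates or frequency monotonicity for $A_2$-weights (R\"uland; Fall--Felli). This applies to $U\in H^1(y^b)$ with vanishing Cauchy data on $B'\times\{0\}$. A cleaner alternative is to even-reflect $U$ across $y=0$, i.e.\ $\widetilde U(x,y)=U(x,|y|)$. Then $\widetilde U\in H^1_{\rm loc}(B'\times(-\kappa,\kappa),|y|^{b})$ is a weak solution of $\operatorname{div}(|y|^{b}\nabla\widetilde U)=0$, because the conormal derivative vanishes. It also vanishes on $B'\times\{0\}$, a codimension-one set. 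Since $\widetilde U$ is then odd-type zero too, the odd extension is also a weak solution, and the even and odd extensions coincide only if $\widetilde U\equiv0$ near $B'\times\{0\}$, after invoking weak UCP for $A_2$-weighted equations.

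Once $U$ vanishes on an open subset of $\R^{n+1}_+$, interior unique continuation applies. In $\R^{n+1}_+$ the operator is uniformly elliptic with smooth (real-analytic) coefficients, so $U$ is real analytic there. The half-space is connected, hence $U\equiv0$ on $\R^{n+1}_+$. The $L^1_{\rm loc}$ convergence $U(\cdot,y)\to g$ then yields $g=0$ a.e.

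\textbf{Main obstacle.} The hard step is the boundary UCP for the degenerate operator, namely turning vanishing Cauchy data into local vanishing. We would cite the known result for $H^1(y^{1-2s})$ solutions (Rüland's Carleman estimate, or the Fall--Felli result), after checking that our local regularity suffices. We must also ensure that the tail of $g$ in $L_s$, which need not be locally bounded, never enters this step. It does not: on $B'$ everything is expressed through smooth kernels integrated against $g$ on $\R^n\setminus V$.
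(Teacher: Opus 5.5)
Your proposal is correct and follows essentially the paper's route. The steps match one for one: the Caffarelli--Silvestre extension; regularity up to $B'\times\{0\}$ from the separation of $B'$ and $\mathbb{R}^n\setminus V$, where your factorization $U=y^{2s}\Phi(x,y^2)$ is a tidy substitute for the kernel estimates of Steps~2--3; the weighted Neumann datum as a multiple of $(-\Delta)^s g$; boundary unique continuation via \cite[Proposition~2.2]{ruland2015unique}; interior unique continuation; and recovery of $g$ from $U(\cdot,y)\to g$ in $L^1_{\rm loc}$.

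Your remark that $\Phi$ is ``only smooth'' undersells your own structure, however. For fixed $x\in B'$, $z\in\mathbb{R}^n\setminus V$ and $\operatorname{Re}t>-\delta^2/2$ one has $\operatorname{Re}(|x-z|^2+t)\geq|x-z|^2/2$, which gives the integrable majorant $2^{(n+2s)/2}|x-z|^{-n-2s}|g(z)|$. Hence $t\mapsto\Phi(x,t)$ is holomorphic on that half-plane, and the infinite-order vanishing $\partial_t^k\Phi(x,0)=0$ you derived already forces $U=0$ on $B'\times(0,\infty)$, so the boundary Carleman/frequency-function input becomes unnecessary.
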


\begin{proof}
We establish the claim in five steps.\\

\noindent\textbf{Step 1:} \textit{Caffarelli--Silvestre extension.}

We denote by
\begin{equation}
\label{eq: CS extension}
    U(x,y)\vcentcolon = P^s_y*g(x),\quad (x,y)\in\R^{n+1}_+,
\end{equation}
the Caffarelli--Silvestre (CS) extension of $g$. Here 
\[
P^s_y(x)\vcentcolon =p_{n,s}\frac{y^{2s}}{(|x|^2+y^2)^{(n+2s)/2}}
\]
is the fractional Poisson kernel and the constant $p_{n,s}>0$ is chosen such that 
\begin{equation}
\label{eq: normalization fractional poisson}
    \int_{\R^n}P^s_y(x)\,dx=1.
\end{equation}
Since $g\in L_s$, the convolution \eqref{eq: CS extension} is well-defined for every $y>0$. Indeed, for fixed $x\in\R^n$ and $y>0$, this follows from $g\in L_s\subset L^1_{\rm loc}(\R^n)$, $P_y^s\in L^\infty_{\mathrm{loc}}(\R^n)$ and the tail estimate
\begin{equation}
\label{eq: tail estimate}
    P_y^s(x-z)\leq C_{n,s}\frac{y^{2s}}{1+|z|^{n+2s}},
\end{equation}
whenever $|z|\geq R_x\vcentcolon = \max(1,2|x|)$. Indeed, this yields the pointwise estimate
\begin{equation}
\label{eq:pointwise-estiamte}
    \begin{split}
        &|P^s_y\ast g(x)|\leq \int_{\R^n}P^s_y(x-z)|g(z)|\,dz\\
        &\leq \|P^s_y(x-z)\|_{L^{\infty}(B_{R_x})}\int_{B_{R_x}}|g(z)|\,dz+C_{n,s}y^{2s}\int_{B^c_{R_x}}\frac{|g(z)|}{1+|z|^{n+2s}}\,dz\\
        &\leq \|P^s_y\|_{L^{\infty}(B_{R_x}(x))}\|g\|_{L^1(B_{R_x})}\,dz+C_{n,s}y^{2s}\|g\|_{L_s}\\
        &\leq p_{n,s}y^{-n}\|g\|_{L^1(B_{R_x})}\,dz+C_{n,s}y^{2s}\|g\|_{L_s}<\infty
    \end{split}
\end{equation}
for every $x\in\R^n$ and every $y>0$.
Hence, it remains to show \eqref{eq: tail estimate}. The condition $|z|\geq 2|x|$ yields
\[
    |x-z|\geq |z|-|x|\geq \frac{|z|}{2}.
\]
Consequently,
\[
    |x-z|^2+y^2\geq |x-z|^2\geq \frac{|z|^2}{4}.
\]
Hence
\[
\begin{split}
    P_y^s(x-z)
    &=
    p_{n,s}\frac{y^{2s}}{(|x-z|^2+y^2)^{(n+2s)/2}}  \leq
    C_{n,s}y^{2s}|z|^{-n-2s},
\end{split}
\]
where the constant $C_{n,s}>0$ only depends on $n$ and $s$. Since $|z|\geq 1$, we have
\[
    |z|^{-n-2s}
    \leq \frac{2}{1+|z|^{n+2s}}.
\]
Thus, \eqref{eq: tail estimate} follows.

Next, we show that $U\in L^{\infty}_{\rm loc}(\R^{n+1}_+)$, that is, for every compact set $K\Subset\mathbb R^n$ and all $0<r<R<\infty$,
\begin{equation}
\label{eq:uniform-bound-CS}
    \|U\|_{L^{\infty}(K\times (r,R))}\leq CR^{2s}(r^{-n-2s}\|g\|_{L^1_{\rm loc}(\R^n)}+\|g\|_{L_s})<\infty.
\end{equation}
To this end, let us define
\begin{equation}
\label{eq: radius RK}
    R_K\vcentcolon = \max(1,2 \ell_K)\quad \text{with }\ell_K\vcentcolon = \sup_{x\in K}|x|.
\end{equation}
Similar to the computation \eqref{eq:pointwise-estiamte}, we observe that \eqref{eq: tail estimate} and $0<r<R<\infty$ yield
\[
\begin{split}
    |U(x,y)|&\leq \int_{B_{R_K}}P^s_y(x-z)|g(z)|\,dz+\int_{B^c_{R_K}}P^s_y(x-z)|g(z)|\,dz\\
    &\leq C R^{2s}r^{-n-2s}\|g\|_{L^1(B_{R_K})}+CR^{2s}\|g\|_{L_s}
\end{split}
\]
for a.e.~$x\in K$ and $r<y<R$. This proves \eqref{eq:uniform-bound-CS}.

Next, we observe that $P^s_y$, $y>0$, has the following additional properties (see \cite[Lemma~7.1]{KRZ-2023}):
\begin{enumerate}[(i)]
    \item\label{item: scaling} For any $y>0$, $P^s_y(x)=y^{-n}P^s_1(x/y)$.
    \item\label{item: decreasing} For every $y>0$, $P^s_y(x)$ is decreasing in $|x|$ and $P^s_y\in L^p(\R^n)$, $1\leq p\leq \infty$.
    \item\label{item: Dirac property} For any $\delta >0$, we have
    \[
        \int_{\R^n\setminus B_\delta}P^s_y(x)\,dx\to 0\quad \text{as }y\to 0.
    \]
    \item\label{item: s harmonic} The fractional Poisson kernel is of class $C^{\infty}(\R^{n+1}_+)$ and solves the degenerate elliptic equation
    \[
        \operatorname{div}(y^{1-2s}\nabla P^s)=0\quad\text{in }\R^{n+1}_+.
    \]
\end{enumerate}
The properties \ref{item: scaling}--\ref{item: decreasing} follow by direct computation. Combining \ref{item: scaling} and
\[
    \|P^s_1\|_{L^1(\R^n)}=\|P^s_y\|_{L^1(\R^n)}=1
\]
(see \eqref{eq: normalization fractional poisson}) yields \ref{item: Dirac property}. The statement \ref{item: s harmonic} follows again by direct computation. We also note that the degenerate divergence operator can be rewritten as
\begin{equation}
\label{eq: degenerate divergence operator}
    \operatorname{div}(y^{1-2s}\nabla u)=y^{1-2s}\left(\Delta_{x,y}+\frac{1-2s}{y}\partial_y\right)u.
\end{equation}

The above properties show in particular that $(P^s_y)_{y>0}$ is an approximation of the identity in the following sense. For any
$g\in L_s$,
\begin{equation}
\label{eq: trace of U is g}
    U(\cdot,y)=P_y^s*g\to g
    \quad\text{a.e.~in }\R^n
\end{equation}
as $y\downarrow 0$. Moreover, when $g$ additionally satisfies $g\in L^{p}_{\rm loc}(\R^n)$ for some $1\leq p<\infty$, then 
\begin{equation}
\label{eq: Lp trace of U is g}
    U(\cdot,y)=P_y^s*g\to g
    \quad\text{in }L^p_{\loc}(\mathbb R^n).
\end{equation}

To see the above approximation properties of the fractional Poisson kernel, let us first note that, for any $f\in L^p(\R^n)$, $1\leq p<\infty$, we have
\begin{equation}
\label{eq: approximate identity property Lp}
    P_y^s*f\to f\quad\text{in }L^p(\R^n),
\end{equation}
as $y\downarrow 0$. This follows from the $L^p$-continuity of translations, that is
\[
    \|f(\cdot +h)-f\|_{L^p(\R^n)}\to 0
\]
as $|h| \to 0$ and the properties $P^s_y \geq 0$, \eqref{eq: normalization fractional poisson}, \ref{item: Dirac property} (see \cite[Satz~2.15]{alt1985lineare}). Next, suppose that $g\in L^p_{\rm loc}(\R^n)\cap L_s$ for some $1\leq p <\infty$. Let $r>0$. Fix radii $\rho,R>0$ such that $r<\rho<R$ and choose a cutoff function $\eta \in C_c^\infty(\mathbb R^n)$ satisfying
\[
    0\leq \eta\leq 1,\quad \left.\eta\right|_{\overline{B_\rho}}=1,\quad\text{and}\quad \left.\eta\right|_{B_{R}^c}=0.
\]
Then
\begin{equation}
\label{eq:near-tail-decomp-CS}
    P_y^s*g-g
    =
    P_y^s*(\eta g)-\eta g
    +
    P_y^s*((1-\eta)g)
    \quad\text{in }B_{\rho}.
\end{equation}
By \eqref{eq: approximate identity property Lp} and $\eta g\in L^p(\mathbb R^n)$, we have
\[
    P_y^s*(\eta g)-\eta g\to 0\quad \text{in }L^p(\R^n).
\]
Next let us show that the tail term goes to zero as $y\to 0$. Without loss of generality, we may assume that $\rho\geq \max(1,2r)$ and hence \eqref{eq: tail estimate} applies for any $x\in B_r$ and $|z|\geq \rho$. Thus, $\eta=1$ on $B_\rho$ yields
\begin{equation}
\label{eq:conv-tail-CS}
\begin{split}
    |P_y^s*((1-\eta)g)(x)|&\leq \int_{B_{\rho}^c}P^s_y(x-z)|(1-\eta(z))g(z)|\,dz\\
    &
    \leq
    C y^{2s}
    \int_{\mathbb R^n}
    \frac{|g(z)|}{1+|z|^{n+2s}}\,dz
    \to 0
\end{split}
\end{equation}
uniformly for $x\in B_r$. Thus $P_y^s*g\to g$ in $L^p(B_r)$, proving
\eqref{eq: Lp trace of U is g} whenever
$g\in L^p_{\mathrm{loc}}(\mathbb R^n)\cap L_s$. For the almost-everywhere convergence in \eqref{eq: trace of U is g}, we apply the same local/tail decomposition \eqref{eq:near-tail-decomp-CS}. More concretely, suppose that $x$ is a Lebesgue point of $g\in L_s$ and choose $r>0$ such that $x\in B_r$. Let $\eta \in C_c^{\infty}(\R^n)$ be a cutoff function as above. The tail tends to zero locally uniformly by \eqref{eq:conv-tail-CS}. Next, we show 
\begin{equation}
\label{eq:Lebesgue-point-CS-conv}
    (P^s_y\ast (\eta g)-\eta g)(x)\to 0,\quad y\downarrow 0.
\end{equation}
For convenience, set $h\vcentcolon = \eta g\in L_s \cap L^1(\R^n)$. One easily sees that $x$ is again a Lebesgue point for $h$. The function
\[
    p^s_y(t)\vcentcolon=
    p_{n,s}\frac{y^{2s}}{(t^2+y^2)^{(n+2s)/2}}.\quad t>0,
\]
satisfies
\begin{equation}
\label{eq:decay-deriv}
    0\leq -\frac{dp^s_y}{dt}(t)
    =(n+2s)p_{n,s}
    \frac{y^{2s}t}{(t^2+y^2)^{(n+2s+2)/2}}
   =o(t^{-n-s-1})\quad\text{as }t\to\infty.
\end{equation}
Consequently, \eqref{eq: normalization fractional poisson}, the polar decomposition \cite[Section~3.4.4]{evans-mt}, and integration by parts give
\[
\begin{aligned}
    |(P_y^s*h)(x)-h(x)|
    &\leq
    \int_{\mathbb R^n}|h(x-z)-h(x)|p^s_y(|z|)\,dz\\
    &=\int_{0}^\infty p^s_y(t)\left( \int_{\partial B_t}|h(x-z)-h(x)|\,d\mathcal H^{n-1}(z)\right)\,dt\\
    &=-\int_0^\infty \frac{dp^s_y}{dt}(t)\left(\int_{B_t}|h(x-z)-h(x)|\,dz\right)\,dt.
\end{aligned}
\]
From the second to third line we used \eqref{eq:decay-deriv}, 
\[
    \fint_{B_t}|h(x-z)-h(x)|\,dz\to 0
\]
because $x$ is a Lebesgue point, and the fact that for every $k\in L^1_{\rm loc}(\R^n)$ one has
\[
    \frac{d}{dt}\int_{B_t} k\,dx=\int_{\partial B_t} k\,d\mathcal{H}^{n-1}
\]
for a.e.~$t>0$. Let $\varepsilon>0$. Since $x$ is a Lebesgue point, there exists $\delta>0$ such that
\[
    \int_{B_t}|h(x-z)-h(x)|\,dz\leq\varepsilon |B_t|,
    \qquad 0<t\leq\delta.
\]
The normalization of $P_y^s$ and Tonelli's theorem imply
\[
    -\int_0^\infty\frac{dp^s_y}{dt}(t)|B_t|\,dt
    =\int_{\mathbb R^n}P_y^s(z)\,dz=1.
\]
Hence
\[
    -\int_0^\delta\frac{dp^s_y}{dt}(t)\left(\int_{B_t}|h(x-z)-h(x)|\,dz\right)\,dt
    \leq\varepsilon.
\]
For $t\geq\delta$, we use
\[
    \int_{B_t}|h(x-z)-h(x)|\,dz\leq
    \|h\|_{L^1(\mathbb R^n)}+|h(x)|\,|B_t|,
    \qquad
    -\frac{dp^s_y}{dt}(t)\leq C_{n,s}y^{2s}t^{-n-2s-1},
\]
to obtain
\[
\begin{split}
    &-\int_\delta^\infty \frac{dp^s_y}{dt}(t)\left(\int_{B_t}|h(x-z)-h(x)|\,dz\right)\,dt\\
    &\leq C_{n,s}y^{2s}
    \left(
        \|h\|_{L^1(\mathbb R^n)}\delta^{-n-2s}
        +|h(x)|\delta^{-2s}
    \right)
    \longrightarrow0
\end{split}
\]
as $y\downarrow0$. Therefore
\[
    \limsup_{y\downarrow0}|(P_y^s*h)(x)-h(x)|
    \leq\varepsilon.
\]
Letting $\varepsilon\downarrow0$ yields
$(P_y^s*h)(x)\to h(x)$, which verifies \eqref{eq:Lebesgue-point-CS-conv}.
Together with the previously established local uniform
convergence $P_y^s*((1-\eta)g)\to0$ on $B_r$, this proves
\[
    (P_y^s*g)(x)\to g(x)\quad \text{as }y\downarrow 0.
\]
Finally, since almost every point is a Lebesgue point of $g\in L^1_{\mathrm{loc}}(\mathbb R^n)$, the assertion \eqref{eq: trace of U is g} follows.

Next, we show that $U$ solves
\begin{equation}
\label{eq: CS extension PDE}
    \operatorname{div}(y^{1-2s}\nabla U)=0
    \quad\text{in }\mathbb R^{n+1}_+.
\end{equation}
By the elementary kernel estimates, see for instance
\cite[Appendix~B, estimate~(78)]{colombo2020generalized}, one has
\begin{equation}
\label{eq: estimates Poisson kernel}
    |\partial_y^k P_y^s(x)|
    \leq
    C_{n,s,k} y^{-k}P_y^s(x),
    \qquad k\in\mathbb N,
\end{equation}
and
\[
    |\partial_x^\alpha P_y^s(x)|
    \leq
    C_{\alpha,n,s}
    \frac{y^{2s}}
    {(|x|^2+y^2)^{(n+2s+|\alpha|)/2}}.
\]
In particular,
\begin{equation}
\label{eq: estimates Poisson kernel Laplacian}
    |\Delta_x P_y^s(x)|
    \leq
    C_{n,s} y^{-2}P_y^s(x).
\end{equation}
Thus, \eqref{eq: CS extension PDE} is a consequence of \ref{item: s harmonic}, \ref{item: decreasing}, \eqref{eq: degenerate divergence operator}, \eqref{eq: estimates Poisson kernel}, \eqref{eq: estimates Poisson kernel Laplacian},
and Lebesgue's dominated convergence theorem.

\medskip

\noindent\textbf{Step 2:} \textit{$H^1_{\rm loc}$-Regularity of CS extension.}

Next, let us record the local information needed at boundary points of $V\times \{0\}$. Fix
$V'\Subset V$. Since $g=0$ a.e.~in $V$, one may write
\[
    U(x,y)
    =
    \int_{\mathbb R^n\setminus V}
    P^s_y(x-z)g(z)\,dz ,\quad x\in\R^n.
\]
Let $K\Subset V'\Subset V$ and set
\begin{equation}
\label{eq: K distances}
    d_K\vcentcolon =\dist(K,\mathbb R^n\setminus V)>0.
\end{equation}
We assert that for every multi-index $\alpha\in\N^n_0$, one has
\begin{equation}
\label{eq: tangential derivatives of frac poisson}
    |\partial_x^\alpha P^s_y(x-z)|
    \leq
    C_{\alpha,K}
    \frac{y^{2s}}{(1+|x-z|^2+y^2)^{(n+2s+|\alpha|)/2}},
\end{equation}
for all $x\in K,\ y>0$, and $z\in \mathbb R^n\setminus V$.

First assume that $|\alpha|=0$. Since $K\Subset V$ implies that $1\leq d_K^{-1}|x-z|$ for $x\in K$, $z\in \R^n\setminus V$, we have
\begin{equation}
\label{eq: singularity pos distance}
    1+|x-z|^2+y^2 \leq C_K(|x-z|^2+y^2)
\end{equation}
for all $x\in K$, $y>0$, and $z\in\mathbb R^n\setminus V$. Thus, for $x\in K$ and $z\in\R^n\setminus V$, we can estimate
\[
    P^s_y(x-z)\leq C_{K}\frac{y^{2s}}{(1+|x-z|^2+y^2)^{(n+2s)/2}},
\]
which yields \eqref{eq: tangential derivatives of frac poisson} with $\alpha=0$.

Next, we consider the case $|\alpha|\geq 1$. Set
\[
    w=x-z,\quad q(w,y)=|w|^2+y^2,\quad F(t)=t^{-\beta},\quad
    \beta=\frac{n+2s}{2}.
\]
Then
\[
    P_y^s(x-z)=p_{n,s}y^{2s}F(q(w,y)).
\]
Since the $x$-derivatives are the same as the $w$-derivatives, it is enough
to estimate $\partial_w^\alpha (F\circ q)$. 

By the multivariable Fa\`a di Bruno formula,
\[
    \partial_w^\alpha(F\circ q)(w,y)
    =
    \sum_{k=1}^{|\alpha|}
    F^{(k)}(q(w,y))
    \sum_{\substack{\gamma^1+\cdots+\gamma^k=\alpha\\ |\gamma^j|\geq 1}}
    C_{\alpha,\gamma^1,\ldots,\gamma^k}
    \prod_{j=1}^k \partial_w^{\gamma^j}q(w,y).
\]
Here the expression simplifies because $q(w,y)=|w|^2+y^2$ is quadratic:
\[
    \partial_w^\gamma q=0\quad\text{if }|\gamma|\geq 3,
\]
while
\[
    |\partial_w^\gamma q|\leq C|w|
    \quad\text{if }|\gamma|=1,
    \qquad
    |\partial_w^\gamma q|\leq C
    \quad\text{if }|\gamma|=2.
\]
Moreover,
\[
    F^{(k)}(t)=C_{\beta,k}t^{-\beta-k}.
\]
Thus every nonzero term in the Fa\`a di Bruno expansion is bounded by
\[
    C q^{-\beta-k}|w|^\ell,
\]
where $\ell$ is the number of first-order derivatives of $q$ appearing in
that term. If $m$ is the number of second-order derivatives of $q$, then
\[
    k=\ell+m,
    \qquad
    |\alpha|=\ell+2m.
\]
Using $|w|^\ell\leq q(w,y)^{\ell/2}$, we obtain
\[
    q^{-\beta-k}|w|^\ell
    \leq
    q^{-\beta-k+\ell/2}
    =
    q^{-\beta-(\ell+2m)/2}
    =
    q^{-\beta-|\alpha|/2}.
\]
Consequently,
\begin{equation}
\label{eq: local bound x derivatives}
    |\partial_x^\alpha P_y^s(x-z)|
    \leq
    C_\alpha y^{2s}
    (|x-z|^2+y^2)^{-(n+2s+|\alpha|)/2}.
\end{equation}
Therefore, taking into account \eqref{eq: singularity pos distance}, we obtain
\begin{equation}
    |\partial_x^\alpha P^s_y(x-z)|
    \leq
    C_{\alpha,K}
    \frac{y^{2s}}{(1+|x-z|^2+y^2)^{(n+2s+|\alpha|)/2}} .
\end{equation}
This completes the verification of \eqref{eq: tangential derivatives of frac poisson}.

Fix $y>0$. Multiplying \eqref{eq: tangential derivatives of frac poisson} by $|g(z)|$ gives an integrable majorant of $z\mapsto \partial_x^\alpha P^s_y(x-z)g(z)$ locally uniformly for $x\in K$. Let $R_K$ be defined as in \eqref{eq: radius RK}. Then the proof of \eqref{eq: tail estimate} shows that
\begin{equation}
        \frac{1}{(1+|x-z|^2+y^2)^{(n+2s+|\alpha|)/2}} \leq \frac{C}{1+|z|^{n+2s}}
\end{equation}
for all $x\in K$, $y>0$, and $|z|\geq R_K$. Hence,
\begin{equation}
\label{eq: near far field decomposition}
\begin{split}
    &\int_{\R^n}\frac{|g(z)|}{(1+|x-z|^2+y^2)^{(n+2s+|\alpha|)/2}} \,dz\\
    &= \int_{B_{R_K}}\frac{|g(z)|}{(1+|x-z|^2+y^2)^{(n+2s+|\alpha|)/2}} \,dz+\int_{B_{R_K}^c}\frac{|g(z)|}{(1+|x-z|^2+y^2)^{(n+2s+|\alpha|)/2}} \,dz\\
    &\leq C\left(\|g\|_{L^1(B_{R_K})}+\int_{B_{R_K}^c}\frac{|g(z)|}{1+|z|^{n+2s}}\,dz\right)<\infty.
\end{split}
\end{equation}
Hence dominated convergence allows us
to pass tangential derivatives under the integral sign,
\[
    \partial_x^\alpha U(x,y)
    =
    \int_{\mathbb R^n\setminus V}
    \partial_x^\alpha P^s_y(x-z)g(z)\,dz .
\]
The same argument gives, for $x\in K$ and $y>0$,
\begin{equation}
\label{eq: local estimates}
    |U(x,y)|\leq Cy^{2s},\qquad
    |\nabla_x U(x,y)|\leq Cy^{2s},\qquad
    |\partial_y U(x,y)|\leq Cy^{2s-1}
\end{equation}
where the constant depends on $K$, on the $L^1_{\rm loc}$-norm of $g$, and
on the $L_s$-norm of $g$. For $|\alpha|\le1$, the estimate above gives
\[
    |\partial_x^\alpha U(x,y)|
    \leq
    C y^{2s}
    \int_{\mathbb R^n\setminus V}
    \frac{|g(z)|}{(1+|x-z|^2)^{(n+2s+|\alpha|)/2}}\,dz.
\]
The last integral is uniformly bounded for $x\in K$, by the estimate \eqref{eq: near far field decomposition}. This allows to obtain the first two estimates in \eqref{eq: local estimates}. 

Similarly, using \eqref{eq: estimates Poisson kernel} with $k=1$ and \eqref{eq: tangential derivatives of frac poisson} with $\alpha=0$, we have
\begin{equation}
\label{eq: uniform bound y derivative poisson}
    |\partial_yP_y^s(x-z)|\leq Cy^{-1}P^s_y(x-z)
    \leq
    C_K \frac{y^{2s-1}}{(1+|x-z|^2)^{(n+2s)/2}}
\end{equation}
for $x\in K$, $z\in \R^n\setminus V$, and $y>0$. Hence
\begin{equation}
\label{eq: uniform bound y derivative U}
    |\partial_y U(x,y)|
    \leq
    C_K y^{2s-1}
    \int_{\mathbb R^n\setminus V}
    \frac{|g(z)|}{(1+|x-z|^2)^{(n+2s)/2}}\,dz
    \leq C_K y^{2s-1},
\end{equation}
where the last integral is finite uniformly for $x\in K$. This again follows from \eqref{eq: near far field decomposition}.

As a consequence of \eqref{eq: local estimates}, we obtain
\begin{equation}
\label{eq: pointwise L2 bounds}
    y^{1-2s}|U|^2\leq Cy^{1+2s},\qquad
    y^{1-2s}|\nabla_x U|^2\leq Cy^{1+2s},\qquad
    y^{1-2s}|\partial_y U|^2\leq Cy^{2s-1},
\end{equation}
which are integrable near $y=0$ because $s>0$. Away from $y=0$, the
Poisson extension is smooth. Therefore
\begin{equation}
\label{eq: local regularity CS extension}
    U\in H^1_{\mathrm{loc}}(V'\times[0,\infty);y^{1-2s}dxdy).
\end{equation}

\medskip 

\noindent\textbf{Step 3:} \textit{Vanishing of conormal derivative.}
We claim that, for every $\varphi\in C_c^\infty(V')$,
\begin{equation}
\label{eq: vanishing conormal derivative}
    \lim_{y\downarrow 0}
    \int_{\mathbb R^n} y^{1-2s}\partial_y U(x,y)\varphi(x)\,dx
    =
    C_{n,s}\,\langle (-\Delta)^s g,\varphi\rangle
    =0,
\end{equation}
where 
\begin{equation}
\label{eq: Cs extension constant}
    C_{n,s}\vcentcolon =-2^{1-2s}\frac{\Gamma(1-s)}{\Gamma(s)}.
\end{equation}
Equivalently, $U$ has vanishing weighted conormal derivative on
$V'\times\{0\}$ in the weak sense. 

Let $K\vcentcolon = \supp\varphi$. By \eqref{eq: uniform bound y derivative U}, for $x\in K$ and $0<y<1$,
\[
    |y^{1-2s}\partial_yU(x,y)\varphi(x)|
    \leq
    C_K|\varphi(x)|.
\]
Since $\varphi\in C_c^\infty(V')$, the right hand side belongs to
$L^1(\mathbb R^n)$. Thus, once the pointwise limit of
$y^{1-2s}\partial_yU(x,y)$ is identified for $x\in K$, dominated
convergence may be applied in the $x$-integral.

For any $w\neq 0$ and $y>0$, we have
\[
\begin{split}
    y^{1-2s}\partial_y P_y^s(w)
    &=
    p_{n,s}
    \left[
        2s\frac{1}{(|w|^2+y^2)^{(n+2s)/2}}
        -(n+2s)\frac{y^2}{(|w|^2+y^2)^{(n+2s+2)/2}}
    \right].
\end{split}
\]
Thus,
\[
    y^{1-2s}\partial_y P^s_y(x-z)g(z)\to 2s p_{n,s}\frac{g(z)}{|x-z|^{n+2s}},\quad (y\to 0),
\]
for $x\in K$ and $z\in\R^n\setminus V$. Now, since $y^{1-2s}\partial_y P^s_y(x-z)g(z)$, $0<y<1$, is uniformly bounded by an integrable function (see \eqref{eq: near far field decomposition}), using dominated convergence, we may pass the $y\to 0$ limit into the $z$-integral and use the above pointwise limit to obtain
\[
\begin{split}
    \lim_{y\downarrow0}
    y^{1-2s}\partial_y U(x,y)
    &=
    2s p_{n,s}
    \int_{\mathbb R^n\setminus V}
    \frac{g(z)}{|x-z|^{n+2s}}\,dz
\end{split}
\]
for every $x\in K$. Consequently,
\[
\begin{split}
    \lim_{y\downarrow0}
    \int_{\mathbb R^n}
    y^{1-2s}\partial_y U(x,y)\varphi(x)\,dx
    &=
    2s p_{n,s}
    \int_{V'}
    \varphi(x)
    \int_{\mathbb R^n\setminus V}
    \frac{g(z)}{|x-z|^{n+2s}}\,dz\,dx .
\end{split}
\]
On the other hand, since $g=0$ in $V$, for $x\in V'$ the singular
integral defining $(-\Delta)^s g$ is nonsingular and equals
\[
    (-\Delta)^s g(x)
    =
    c_{n,s}\operatorname{p.v.}
    \int_{\mathbb R^n}
    \frac{g(x)-g(z)}{|x-z|^{n+2s}}\,dz
    =
    -c_{n,s}
    \int_{\mathbb R^n\setminus V}
    \frac{g(z)}{|x-z|^{n+2s}}\,dz .
\]
The last integral is continuous in $x\in V'$. Since, by assumption,
$(-\Delta)^s g=0$ in $\distr(V)$, this continuous representative must
vanish in $V'$. Therefore
\[
    \lim_{y\downarrow0}
    \int_{\mathbb R^n}
    y^{1-2s}\partial_y U(x,y)\varphi(x)\,dx
    =
    -\frac{2s p_{n,s}}{c_{n,s}}
    \langle (-\Delta)^s g,\varphi\rangle
    =
    0 .
\]
Finally, the value of 
\[
    C_{n,s}\vcentcolon = -\frac{2s p_{n,s}}{c_{n,s}},
\]
given in \eqref{eq: Cs extension constant}, follows from \eqref{eq: constant fractional Lap} and 
\[
    p_{n,s}=\frac{\Gamma(n/2+s)}{\pi^{n/2}\Gamma(s)}.
\]
\medskip 

\noindent\textbf{Step 4:}
\textit{Local vanishing of the CS extension.}

Fix $x_0\in V$ and choose $r>0$ such that
$\overline{B_{2r}(x_0)}\subset V$.
Set $D=B_{2r}(x_0)$ and $Q=D\times(0,2r)$. Let $K=\overline{D}$ and choose an open set $V'\subset \R^n$ such that $D\Subset V'\Subset V$. The estimates \eqref{eq: local estimates} and the regularity assertion \eqref{eq: local regularity CS extension} from Step~2, applied with $K=\overline D$ and $V'$ as above, give
\[
    U\in H^1(Q;y^{1-2s}\,dx\,dy)\cap L^\infty(Q).
\]
Moreover,
\[
    \operatorname{div}(y^{1-2s}\nabla U)=0
    \quad\text{in }Q.
\]

We first identify the weighted Sobolev trace.
For $0<t<y<2r$, the fundamental theorem of calculus and
weighted Cauchy--Schwarz imply
\[
\begin{aligned}
    \|U(\cdot,y)-U(\cdot,t)\|_{L^2(D)}
    &\leq
    \left(\int_t^y\tau^{2s-1}\,d\tau\right)^{1/2}
    \left(
        \int_t^y\int_D
        \tau^{1-2s}|\partial_\tau U|^2\,dx\,d\tau
    \right)^{1/2}\\
    &=
    \left(\frac{y^{2s}-t^{2s}}{2s}\right)^{1/2}
    \left(
        \int_t^y\int_D
        \tau^{1-2s}|\partial_\tau U|^2\,dx\,d\tau
    \right)^{1/2}.
\end{aligned}
\]
Hence $U(\cdot,y)$ has an $L^2(D)$ limit as $y\downarrow0$.
This slice limit realizes the weighted Sobolev trace. The resulting trace map is continuous in the weighted
$H^1$ norm and agrees with boundary restriction on smooth
functions. Since \eqref{eq: local estimates} also gives
\[
    \|U(\cdot,y)\|_{L^2(D)}
    \leq C|D|^{1/2}y^{2s}\to 0,
\]
the trace is zero.

We next verify the weak formulation of the zero conormal
condition. Let
$\Phi\in C_c^\infty(D\times(-2r,2r))$.
Integration by parts in $D\times(\varepsilon,2r)$ gives
\[
    \int_{D\times(\varepsilon,2r)}
    y^{1-2s}\nabla U\cdot\nabla\Phi\,dx\,dy
    =
    -\int_D
    \varepsilon^{1-2s}\partial_yU(x,\varepsilon)
    \Phi(x,\varepsilon)\,dx.
\]
Write $\Phi(x,\varepsilon)=\Phi(x,0)
+(\Phi(x,\varepsilon)-\Phi(x,0))$.
By Step~3, the boundary integral with $\Phi(x,0)$ tends
to zero. By \eqref{eq: uniform bound y derivative U},
the remaining term is bounded in absolute value by
\[
    C\|\Phi(\cdot,\varepsilon)-\Phi(\cdot,0)\|_{L^1(D)}
    \to 0.
\]
Since $U\in H^1(Q;y^{1-2s}\,dx\,dy)$ and
$y^{1-2s}$ is integrable near $y=0$, we may also pass
to the limit on the left. Thus
\[
    \int_Q y^{1-2s}\nabla U\cdot\nabla\Phi\,dx\,dy=0.
\]
This establishes the zero conormal condition in the weak
sense, including test functions that meet the flat boundary.

The restriction of $U$ to
\[
    B_{2r}^+((x_0,0))
    \vcentcolon=
    B_{2r}((x_0,0))\cap\mathbb R^{n+1}_+
\]
therefore satisfies the hypotheses of
\cite[Proposition~2.2]{ruland2015unique}.
After translation and rescaling, that proposition yields
\[
    U=0\quad\text{in }B_{2r}^+((x_0,0)).
\]
In particular,
\[
    U=0\quad\text{in }B_r(x_0)\times(0,r),
\]
since this cylinder is contained in the half-ball.
\medskip

\medskip 

\noindent\textbf{Step 5:} \textit{Global vanishing and recovery of the trace.}

It remains to pass from local vanishing of $U$ to global vanishing of $g$.
From the preceding step, there exists a nonempty open set $\mathcal O\subset \mathbb R^{n+1}_+$ such that
\[
    U=0\quad\text{in }\mathcal O.
\]
We show that this forces $U\equiv0$ in the whole upper half-space $\R^{n+1}_+$.

To this end, let
\[
    \mathcal Z
    =
    \bigl\{(x,y)\in\mathbb R^{n+1}_+:
    \text{ there exists }\rho>0\text{ such that }
    U=0\text{ in }B_\rho(x,y)\bigr\}.
\]
Then $\mathcal Z$ is nonempty and open. We claim that $\mathcal Z$ is also
closed in $\mathbb R^{n+1}_+$. Let $(x_j,y_j)\in\mathcal Z$ and suppose
\[
    (x_j,y_j)\to (x_\ast,y_\ast)\in\mathbb R^{n+1}_+.
\]
Choose $\rho>0$ such that
\[
    B_{2\rho}(x_\ast,y_\ast)\Subset \mathbb R^{n+1}_+.
\]
Since $(x_j,y_j)\in\mathcal Z$, there exists $\rho_j>0$ such that
$U=0$ in $B_{\rho_j}(x_j,y_j)$. Reducing $\rho_j$ if necessary, and using
$(x_j,y_j)\in B_\rho(x_\ast,y_\ast)$ for $j$ large enough, this gives a nonempty open subset of
$B_{2\rho}(x_\ast,y_\ast)$ on which $U$ vanishes. 

Inside $B_{2\rho}(x_\ast,y_\ast)$, the weight $y^{1-2s}$ is bounded above
and below by positive constants, and is smooth. Hence $U$ solves a uniformly
elliptic equation with smooth coefficients in this ball:
\[
    \operatorname{div}(y^{1-2s}\nabla U)=0.
\]
By the usual interior unique continuation property for uniformly elliptic
equations with (bounded) smooth coefficients, vanishing in a nonempty open subset of
$B_{2\rho}(x_\ast,y_\ast)$ implies
\[
    U=0\quad\text{in }B_{2\rho}(x_\ast,y_\ast).
\]
In particular, $(x_\ast,y_\ast)\in\mathcal Z$. Thus $\mathcal Z$ is both
open and closed in the connected set $\mathbb R^{n+1}_+$. Since
$\mathcal Z\neq\emptyset$, we conclude
\[
    U=0\quad\text{in }\mathbb R^{n+1}_+.
\]

Finally, since $g\in L_s\subset L^1_{\mathrm{loc}}(\mathbb R^n)$,
the local approximate identity property
\eqref{eq: Lp trace of U is g}, with $p=1$, gives
\[
    P_y^s*g\to g
    \quad\text{in }L^1_{\mathrm{loc}}(\mathbb R^n).
\]
Since $P_y^s*g=U(\cdot,y)=0$ for every $y>0$, we conclude
that $g=0$ almost everywhere in $\mathbb R^n$.
\end{proof}

The preceding theorem also recovers unique continuation in
Bessel potential spaces of arbitrary order, by the mollification
argument of \cite[Lemma~7.5]{KRZ-2023}.

\begin{corollary}[Unique continuation in Bessel potential spaces]
\label{cor:UCP-Bessel-potential-spaces}
Let $0<s<1$, $r\in\mathbb R$, $1\leq p<\infty$,
and let $V\subset\mathbb R^n$ be a nonempty open set.
If $u\in H^{r,p}(\mathbb R^n)$ satisfies
\[
    u=0,\qquad (-\Delta)^s u=0
    \quad\text{in }\mathcal D'(V),
\]
then $u=0$ in $\mathbb R^n$.
\end{corollary}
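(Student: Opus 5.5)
The plan is to reduce the corollary to Theorem~\ref{theorem:bounded-fractional-ucp} by mollification, in the spirit of \cite[Lemma~7.5]{KRZ-2023}. Since $u\in H^{r,p}(\mathbb R^n)$ is in general only a tempered distribution, we first regularize. Fix $x_0\in V$ and $\rho>0$ with $B_{2\rho}(x_0)\subset V$. Take a standard mollifier $\eta_\varepsilon\in C_c^\infty(B_\varepsilon)$ and set $u_\varepsilon=\eta_\varepsilon*u$ for $0<\varepsilon<\rho$.

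The first step records the properties of $u_\varepsilon$. Convolution with $\eta_\varepsilon\in\schwartz(\mathbb R^n)$ maps $H^{r,p}$ into $H^{t,p}$ for every $t$. In particular $u_\varepsilon\in H^{t,p}$ for some $t>n/p$, which gives $u_\varepsilon\in L^p(\mathbb R^n)\cap C^\infty(\mathbb R^n)$. Since $L^p(\mathbb R^n)\subset L_s$, we conclude $u_\varepsilon\in L_s$.

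The second step transfers the hypotheses to $u_\varepsilon$. Because $u=0$ in $\distr(V)$ and $\supp\eta_\varepsilon\subset B_\varepsilon$, we get $u_\varepsilon=0$ on $V_\varepsilon=\{x\in V:\dist(x,\mathbb R^n\setminus V)>\varepsilon\}\supset B_\rho(x_0)$.

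For the fractional Laplacian, the key fact is that $(-\Delta)^s$ commutes with convolution by $\eta_\varepsilon$. On the Fourier side $\widehat{\eta_\varepsilon}$ is a Schwartz function, so $(-\Delta)^s u_\varepsilon=\eta_\varepsilon*(-\Delta)^s u$ as tempered distributions. One should check that the $\schwartz_s'$-duality definition \eqref{eq: fractional Lap on Ss} agrees with the multiplier definition on $H^{r,p}$, which holds by density of $\schwartz$ and continuity. Hence $(-\Delta)^s u_\varepsilon=0$ in $\distr(V_\varepsilon)$, and in particular in $\distr(B_\rho(x_0))$.

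The third step applies Theorem~\ref{theorem:bounded-fractional-ucp} to $g=u_\varepsilon$ on the open set $B_\rho(x_0)$. This gives $u_\varepsilon=0$ a.e. in $\mathbb R^n$ for every $0<\varepsilon<\rho$. Finally, $u_\varepsilon\to u$ in $\tempered(\mathbb R^n)$ (indeed in $H^{r,p}$ since $p<\infty$), so $u=0$.

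The main obstacle is the second step: justifying rigorously that $(-\Delta)^s$ commutes with mollification when $u$ is only a distribution of possibly negative order. This requires identifying the two definitions of $(-\Delta)^s$ on $H^{r,p}$. I would handle it by pairing against $\varphi\in C_c^\infty(V_\varepsilon)$ and writing
\[
\langle(-\Delta)^s u_\varepsilon,\varphi\rangle=\langle u,(-\Delta)^s(\tilde\eta_\varepsilon*\varphi)\rangle=\langle u,\tilde\eta_\varepsilon*(-\Delta)^s\varphi\rangle=\langle(-\Delta)^s u,\tilde\eta_\varepsilon*\varphi\rangle=0,
\]
where $\tilde\eta_\varepsilon(x)=\eta_\varepsilon(-x)$ and $\tilde\eta_\varepsilon*\varphi\in C_c^\infty(V)$. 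The only point requiring care is the duality pairing of $H^{r,p}$ with $(-\Delta)^s$ applied to Schwartz functions.
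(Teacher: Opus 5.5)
Your proposal is correct and follows essentially the same route as the paper. Both arguments mollify $u$, note that $u_\varepsilon\in L^p\subset L_s$ (the paper writes $u_\varepsilon=(J^{-r}\rho_\varepsilon)*J^r u$ and applies Young's inequality), transfer the vanishing of $u$ and $(-\Delta)^s u$ to a smaller open set by commuting $(-\Delta)^s$ with mollification, apply Theorem~\ref{theorem:bounded-fractional-ucp}, and let $\varepsilon\downarrow0$. Your duality computation justifying the commutation supplies a detail that the paper only asserts.
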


\begin{proof}
Let $\rho\in C_c^\infty(B_1)$ be a nonnegative, even
mollifier with integral one, and set
\[
    \rho_\varepsilon(x)=\varepsilon^{-n}\rho(x/\varepsilon),
    \qquad u_\varepsilon=\rho_\varepsilon*u.
\]
Writing $J=(1-\Delta)^{1/2}$ and $v=J^r u\in L^p$,
we have
\[
    u_\varepsilon=(J^{-r}\rho_\varepsilon)*v.
\]
Since $J^{-r}\rho_\varepsilon\in\mathcal S(\mathbb R^n)$,
Young's inequality gives $u_\varepsilon\in L^p\cap L^\infty$.
In particular, $u_\varepsilon\in L_s$.

Choose a nonempty open set $V'\Subset V$.
For $0<\varepsilon<\operatorname{dist}(\overline{V'},V^c)$,
local vanishing and commutation with mollification imply
\[
    u_\varepsilon=0,\qquad
    (-\Delta)^s u_\varepsilon
    =\rho_\varepsilon*((-\Delta)^s u)=0
    \quad\text{in }V'.
\]
Theorem~\ref{theorem:bounded-fractional-ucp} therefore gives
$u_\varepsilon=0$ in $\mathbb R^n$.
Passing to the limit as $\varepsilon\downarrow0$ in
$\mathcal D'(\mathbb R^n)$ yields $u=0$.
\end{proof}

In particular, the same unique continuation conclusion holds
for $u\in W^{r,p}(\mathbb R^n)$ whenever $r\geq0$ and
$1\leq p\leq\infty$, since
\[
    W^{r,p}(\mathbb R^n)\subset L^p(\mathbb R^n)\subset L_s.
\]

\subsection{Uniqueness proof for \texorpdfstring{$L^\infty$}{bounded} conductivities}
\label{subsec:uniqueness-nd}

In this section we adapt the proofs of Lemma~\ref{lemma: exterior determination for n=1 s=1/2}, Lemma~\ref{lemma: equality of weighted solutions}, and Theorem~\ref{thm:uniqueness-1d} to the general cases $n\geq 1$ and $0<s<1$.

\begin{lemma}[Exterior determination at Lebesgue points]
\label{lemma: exterior determination general}
Let $x_0\in W$ be a common Lebesgue point of $a_1$ and $a_2$. Assume that
\begin{equation}
    \label{eq:equal-DN-exterior-det}\Lambda^s_{\gamma_1}f|_W=\Lambda^s_{\gamma_2}f|_W
    \qquad\text{for all }f\in C_c^\infty(W).
\end{equation}
Then
\[
    \gamma_1(x_0)=\gamma_2(x_0).
\]
Consequently, $\gamma_1=\gamma_2$ a.e.~in $W$.
\end{lemma}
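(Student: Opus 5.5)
We follow the proof of Lemma~\ref{lemma: exterior determination for n=1 s=1/2} essentially line by line, with the scaling exponents adapted to general $n$ and $s$. First choose an open ball $W_0\Subset W$ with $x_0\in W_0$ and $\dist(W_0,\Omega)>0$. Fix $\psi\in C_c^\infty(B_1)$, not identically zero, and set
\[
    \psi_N(x)=A N^{\frac{n-2s}{2}}\psi(N(x-x_0)),
\]
with $A>0$ chosen so that $\frac{c_{n,s}}{2}[\psi_N]_{H^s(\R^n)}^2=1$. The change of variables $X=N(x-x_0)$, $Y=N(y-x_0)$ shows that this seminorm is independent of $N$. It also gives
\[
    \|\psi_N\|_{L^2(\R^n)}^2=A^2N^{-2s}\|\psi\|_{L^2(\R^n)}^2\to0 .
\]
For $N$ large we have $\psi_N\in C_c^\infty(W_0)$.

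Let $u_{j,N}$ denote the solution with exterior value $\psi_N$. The exterior energy estimate \cite[Lemma~3.1]{RZ2022LowReg} applies in any dimension because $\dist(W_0,\Omega)>0$, and yields
\[
    \|u_{j,N}-\psi_N\|_{H^s(\R^n)}\le C\|\psi_N\|_{L^2(W_0)}\to0 .
\]
Expanding $B^s_{\gamma_j}(u_{j,N},u_{j,N})$ exactly as in \eqref{eq:DN-decomp} then gives
\[
    \langle\Lambda^s_{\gamma_j}\psi_N,\psi_N\rangle=B^s_{\gamma_j}(\psi_N,\psi_N)+o(1).
\]
This step uses the continuity of $B^s_{\gamma_j}$ on $H^s(\R^n)$ and the fact that $\|\psi_N\|_{H^s(\R^n)}$ stays bounded.

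The core step is to show that $B^s_{\gamma_j}(\psi_N,\psi_N)\to a_j(x_0)^2=\gamma_j(x_0)$. Put $a_0=a_j(x_0)$. Using $|a(x)a(y)-a_0^2|\le C(|a(x)-a_0|+|a(y)-a_0|)$, the normalization and the symmetry in $x,y$, we obtain
\[
    |B^s_{\gamma_j}(\psi_N,\psi_N)-a_0^2|\le C\int_{\R^n}|a(z)-a_0|K_N(z)\,dz,\qquad K_N(z)=\int_{\R^n}\frac{|\psi_N(x)-\psi_N(z)|^2}{|x-z|^{n+2s}}\,dx .
\]
Scaling gives $K_N(z)=N^nK(N(z-x_0))$, where
\[
    K(Z)=A^2\int_{\R^n}\frac{|\psi(X)-\psi(Z)|^2}{|X-Z|^{n+2s}}\,dX .
\]
Here $K\in L^1(\R^n)$ because $\psi\in H^s(\R^n)$, and $K\ge0$. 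So $(K_N)$ is an $L^1$ approximate identity. After substituting $Z=N(z-x_0)$, we split into $|Z|>R$ and $|Z|\le R$.

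The tail $|Z|>R$ is small by boundedness of $a$ and integrability of $K$. On $B_R$ we approximate $K$ in $L^1$ by a simple function $\sum_m c_m\chi_{Q_m}$, with $Q_m\subset B_R$ cubes; this replaces the interval step functions of the $n=1$ case and follows from density of continuous functions plus uniform continuity. Each resulting term is bounded by
\[
    c_m N^n\int_{B_{R/N}(x_0)}|a-a_0|\,dz=c_m|B_R|\fint_{B_{R/N}(x_0)}|a-a_0|\,dz,
\]
which tends to zero because $x_0$ is a Lebesgue point of $a$. Letting first $N\to\infty$ and then $\varepsilon\to0$ proves the claim.

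Finally, the equality of DN maps gives
\[
    \gamma_1(x_0)=\lim_N\langle\Lambda^s_{\gamma_1}\psi_N,\psi_N\rangle=\lim_N\langle\Lambda^s_{\gamma_2}\psi_N,\psi_N\rangle=\gamma_2(x_0).
\]
The a.e. statement in $W$ follows because almost every point of $W$ is a common Lebesgue point of $a_1$ and $a_2$, and at such points $\gamma_j=a_j^2$ pointwise, as shown before Lemma~\ref{lemma: exterior determination for n=1 s=1/2}.

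The only real obstacle is the exponent bookkeeping in the scaling. One must check that the amplitude $N^{(n-2s)/2}$ simultaneously keeps the $H^s$ energy fixed, makes the $L^2$ norm vanish (since $s>0$), and turns $K_N$ into an $N^n$-rescaled $L^1$ kernel. The remaining steps are verbatim adaptations of the $n=1$, $s=1/2$ argument.
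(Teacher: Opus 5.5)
Your proposal is correct and follows the paper's proof essentially step by step: the same scaling $\psi_N=AN^{(n-2s)/2}\psi(N(\cdot-x_0))$, the exterior energy estimate, the DN energy decomposition, and the $L^1$ approximate-identity argument at a Lebesgue point. The only deviation is in the bulk part, where the paper replaces $K$ by the truncation $\widehat K_M=\min\{K,M\}$ and bounds $\int_{B_R}|a(x_0+Z/N)-a_0|\widehat K_M\,dZ\le M N^n\int_{B_{R/N}(x_0)}|a-a_0|\,dz$ directly, which avoids your approximation of $K\chi_{B_R}$ by simple functions on cubes (both versions work).
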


\begin{proof}
 Since the construction is very similar to the one of Lemma~\ref{lemma: exterior determination for n=1 s=1/2}, we limit ourselves to the main steps. Choose a bounded open set $W_0\Subset W$ such that $x_0\in W_0$ and
$\dist(W_0,\Omega)>0$. Let $\psi\in C_c^\infty(B_1(0))$, $\psi\not\equiv0$, and set
\[
    \psi_N(x)=A N^{(n-2s)/2}\psi(N(x-x_0)),\quad N>0,
\]
where $A>0$ is chosen so that
\[
    \frac{c_{n,s}}{2}
    \iint_{\mathbb R^n\times\mathbb R^n}
    \frac{|\psi_N(x)-\psi_N(y)|^2}{|x-y|^{n+2s}}\,dxdy=1.
\]
This normalization is independent of $N$ by scaling. Moreover,
\[
    \|\psi_N\|_{L^2(\mathbb R^n)}^2
    =
    A^2N^{-2s}\|\psi\|_{L^2(\mathbb R^n)}^2
    \to0.
\]
For $N>0$ sufficiently large, $\psi_N\in C_c^\infty(W_0)$.

Let $u_{j,N}\in H^s(\R^n)$ be the unique solution to the fractional conductivity equation $L_{\gamma_j}u=0$ in $\Omega$ with exterior value $\psi_N$. By the exterior
energy estimate from \cite[Lemma~3.1]{RZ2022LowReg},
\[
    \|u_{j,N}-\psi_N\|_{H^s(\mathbb R^n)}
    \leq C\|\psi_N\|_{L^2(W_0)}
    \to 0,\quad \text{as }N\to\infty.
\]
The computation \eqref{eq:DN-decomp} shows that
\[
    \langle \Lambda^s_{\gamma_j}\psi_N,\psi_N\rangle
    =
    B^s_{\gamma_j}(\psi_N,\psi_N)+o(1)\quad \text{as }N\to\infty.
\]
As in the one dimensional case, we claim that
\[
    B^s_{\gamma_j}(\psi_N,\psi_N)\to \gamma_j(x_0).
\]
Fix $j$ and write $a=a_j$, $a_0=a(x_0)$. Our starting point is again
\[
\begin{split}
    |B^s_{\gamma_j}(\psi_N,\psi_N)-a_0^2|
    &\leq
    C\int_{\mathbb R^n}|a(z)-a_0|K_N(z)\,dz,
\end{split}
\]
which follows from the estimate \eqref{eq:product-estimate}. Here we put
\[
    K_N(z):=
    \int_{\mathbb R^n}
    \frac{|\psi_N(x)-\psi_N(z)|^2}{|x-z|^{n+2s}}\,dx .
\]
By scaling, we get
\[
    K_N(z)=N^nK(N(z-x_0)),\quad \text{with}\quad K(Z)\vcentcolon =
    A^2\int_{\mathbb R^n}
    \frac{|\psi(X)-\psi(Z)|^2}{|X-Z|^{n+2s}}\,dX .
\]
Moreover $K\in L^1(\mathbb R^n)$, since $\psi\in C_c^\infty(\mathbb R^n)
\subset H^s(\mathbb R^n)$, and there holds
\[
    \int_{\mathbb R^n}K(Z)\,dZ=\frac{2}{c_{n,s}}.
\]
After changing variables, we have
\[
    \int_{\mathbb R^n}|a(z)-a_0|K_N(z)\,dz
    =
    \int_{\mathbb R^n}|a(x_0+Z/N)-a_0|K(Z)\,dZ.
\]
For fixed $R,M>0$, set $\widehat{K}_M=\min\{K,M\}$. Then
\[
\begin{aligned}
    &\int_{\mathbb R^n}|a(x_0+Z/N)-a_0|K(Z)\,dZ\\
    &\quad = \int_{B_R}|a(x_0+Z/N)-a_0|K(Z)\,dZ+\int_{\R^n\setminus B_R}|a(x_0+Z/N)-a_0|K(Z)\,dZ\\
    &\quad\leq
    M N^n\int_{B_{R/N}(x_0)}|a(z)-a_0|\,dz+2\|a\|_{L^\infty}\int_{B_R}(K-\widehat K_M)\,dZ\\
    &\qquad+
    2\|a\|_{L^\infty}
    \int_{\mathbb R^n\setminus B_R}K\,dZ.
\end{aligned}
\]
For fixed $R,M$, the first term tends to zero, since $x_0$ is a Lebesgue point of $a$ and therefore
\[
    N^n\int_{B_{R/N}(x_0)}|a(z)-a_0|\,dz
    =
    |B_R|\fint_{B_{R/N}(x_0)}|a(z)-a_0|\,dz
    \to 0,
\]
as $N\to\infty$. Taking the upper limit as $N\to\infty$, then letting
$M\to\infty$ and finally $R\to\infty$, we obtain
\[
    \int_{\mathbb R^n}|a(z)-a_0|K_N(z)\,dz\longrightarrow0,
\]
because $K\in L^1(\mathbb R^n)$.

Consequently,
\[
    B^s_{\gamma_j}(\psi_N,\psi_N)
    \to a_j(x_0)^2=\gamma_j(x_0),
    \qquad j=1,2.
\]
Combining this convergence with the DN energy decomposition
and \eqref{eq:equal-DN-exterior-det}, we obtain
\[
\begin{aligned}
    \gamma_1(x_0)
    &=\lim_{N\to\infty}
      \langle\Lambda^s_{\gamma_1}\psi_N,\psi_N\rangle\\
    &=\lim_{N\to\infty}
      \langle\Lambda^s_{\gamma_2}\psi_N,\psi_N\rangle
      =\gamma_2(x_0).
\end{aligned}
\]
Since almost every point of $W$ is a common Lebesgue point
of $a_1$ and $a_2$, it follows that
$\gamma_1=\gamma_2$ almost everywhere in $W$.
\end{proof}

\begin{lemma}[Equality of weighted solutions]
\label{lemma: equality weighted solutions general}
Assume that $a_1=a_2$ a.e.~in $W$. Let $V\Subset W$ be a nonempty open
set and assume that the exterior value $f\in C_c^\infty(W)$ vanishes in a neighborhood of $V$. Suppose that
\begin{equation}
\label{eq: equality of DNs weighted nd}
    \Lambda^s_{\gamma_1}f |_W=\Lambda^s_{\gamma_2}f|_W.
\end{equation}
For $j=1,2$, let $u_j^f\in H^{s}(\R^n)$ be the unique solution to the Dirichlet problem
\begin{equation}
\begin{cases}
\label{eq:Dirichlet-frac-cond-nd}
    L_{\gamma_j} u=0 &\text{in }\Omega,\\
    u=f &\text{in }\Omega_e,
\end{cases}
\end{equation}
then
\[
    a_1u_1^f=a_2u_2^f
    \qquad\text{a.e. in }\mathbb R^n .
\]
\end{lemma}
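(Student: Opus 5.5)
The proof follows Lemma~\ref{lemma: equality of weighted solutions} closely, with $n$-dimensional kernels in place of the one-dimensional ones. Set
\[
    h\vcentcolon =a_1u_1^f-a_2u_2^f\in L^2(\mathbb R^n).
\]
First, $h=0$ a.e.\ in $V$. Indeed, $u_j^f=f$ in $\Omega_e\supset W$, $f=0$ near $V$, and $a_1=a_2$ a.e.\ in $W$. Since $L^2(\mathbb R^n)\subset L_s$, the distribution $(-\Delta)^s h$ is defined by duality. Because $h$ vanishes on $V$, the same reasoning as in Step~3 of the proof of Theorem~\ref{theorem:bounded-fractional-ucp} (or \cite[Proposition~2.1.4]{silvestrephdthesis}) shows the following. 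On any $V'\Subset V$, $(-\Delta)^s h$ is the continuous function
\[
    (-\Delta)^s h(x)=-c_{n,s}\int_{\mathbb R^n\setminus V}\frac{h(z)}{|x-z|^{n+2s}}\,dz,\qquad x\in V'.
\]
This integral converges absolutely: $\dist(V',\mathbb R^n\setminus V)>0$, and $h\in L^2$ is controlled by the kernel decay $|z|^{-n-2s}$.

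Next, fix $\varphi\in C_c^\infty(V)$. By \eqref{eq: equality of DNs weighted nd} and the definition \eqref{eq: DN map} we have $B^s_{\gamma_1}(u_1^f,\varphi)=B^s_{\gamma_2}(u_2^f,\varphi)$; the test function $\varphi$ lies in $C_c^\infty(W)$ and represents a class in $X_s$. Write the difference of the two kernel forms \eqref{eq: fractional bilinear form} and split $\mathbb R^{2n}$ into $V\times V$, $V^c\times V^c$, $V\times V^c$ and $V^c\times V$.
\begin{itemize}
\item On $V\times V$ the integrand vanishes, because $u_j^f=0$ there and $a_1=a_2$.
\item On $V^c\times V^c$ it vanishes, because $\varphi=0$ there.
\item The two mixed pieces agree by the $x\leftrightarrow y$ symmetry of the kernel.
\end{itemize}
For $x\in V$ and $y\in V^c$, with $a\vcentcolon =a_1=a_2$ on $V$, the integrand reduces to $-a(x)h(y)\varphi(x)|x-y|^{-n-2s}$. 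We therefore obtain
\[
    \int_V a(x)\varphi(x)\int_{V^c}\frac{h(y)}{|x-y|^{n+2s}}\,dy\,dx=0\qquad\text{for all }\varphi\in C_c^\infty(V).
\]
Before using Fubini here, check absolute integrability. Restricted to $\supp\varphi\times V^c$, the kernel is bounded by $C(1+|y|)^{-n-2s}$, and $h\in L^2$. The original double integral converges since $u_j^f,\varphi\in H^s$. Hence all rearrangements are justified.

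The inner integral is continuous on $V'$, and $a\ge\lambda_0^{1/2}>0$ with $a\in L^\infty$. Replacing $\varphi$ by $\varphi/a$ is not allowed directly, since $a$ is not smooth. Instead, observe that the function $x\mapsto a(x)\int_{V^c}h(y)|x-y|^{-n-2s}\,dy$ is in $L^1_{\rm loc}(V)$ and annihilates all test functions. It therefore vanishes a.e.\ in $V$. Dividing by $a>0$ gives that the inner integral vanishes a.e.\ in $V$, so $(-\Delta)^s h=0$ a.e.\ and in $\distr(V)$.

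Together with $h=0$ in $V$ and $h\in L^2(\mathbb R^n)=H^0(\mathbb R^n)$, Theorem~\ref{thm:UCP-energy-spaces} (or Theorem~\ref{theorem:bounded-fractional-ucp}) yields $h=0$ a.e., that is, $a_1u_1^f=a_2u_2^f$ a.e.\ in $\mathbb R^n$.

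The only delicate point is the justification of Fubini and the identification of the distribution $(-\Delta)^s h$ on $V$ with the nonsingular tail integral for an $L^2$ function. The rest is bookkeeping identical to the one-dimensional case.
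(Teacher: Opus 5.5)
Your proposal is correct and follows the paper's proof essentially step by step. The steps are: $h=0$ on $V$; $(-\Delta)^s h$ on $V$ equals the nonsingular tail integral; splitting the difference of the bilinear forms over the four regions gives $\int_V a\varphi\int_{V^c}h(y)|x-y|^{-n-2s}\,dy\,dx=0$; positivity of $a$ then gives $(-\Delta)^s h=0$ in $\distr(V)$; and Theorem~\ref{thm:UCP-energy-spaces} with $r=0$ concludes. Your explicit Fubini justification, and your use of the fundamental lemma on $a(x)\int_{V^c}h(y)|x-y|^{-n-2s}\,dy$ instead of testing with $\varphi/a$, spell out details the paper leaves implicit.
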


\begin{proof}
Again we focus on the main differences to the one-dimensional case, established in Lemma~\ref{lemma: equality of weighted solutions}. Throughout the proof, we set
\[
    h=a_1u_1^f-a_2u_2^f\in L^2(\R^n)\subset L_s.
\]
Since $u_j^f=f$ in $\Omega_e$, $f=0$ in a neighborhood of $\overline V$, and $a_1=a_2$ in $W$,
we have
\[
    h=0\quad\text{a.e. in }V.
\]
Since $h=0$ in $V$ and $h\in L_s$, \cite[Proposition~2.1.4]{silvestrephdthesis} demonstrates that $(-\Delta)^{s}$ is a continuous function in $V$ and it is given by the integral
\[
    (-\Delta)^s h(x)
    =
    -c_{n,s}\int_{\mathbb R^n \setminus V}
    \frac{h(y)}{|x-y|^{n+2s}}\,dy 
\]
for $x\in V$. Consequently, $(-\Delta)^s h\in L^1_{\rm loc}(V)$.

Let $a$ denote the common restriction of $a_1$ and $a_2$ to $V$. Since the derivation of formula \eqref{eq: integral identity} is independent of the dimension $n$ and fractional exponent $s$, we again infer from \eqref{eq: equality of DNs weighted nd} that 
\[
    \int_V a(x)\varphi(x)
    \int_{\mathbb R^n \setminus V}
    \frac{h(y)}{|x-y|^{n+2s}}\,dy\,dx=0
\]
for every $\varphi\in C_c^\infty(V)$. Since $a\geq \lambda_0^{1/2}>0$ and the inner integral belongs to $L^1_{\loc}(V)$, we obtain
\[
    (-\Delta)^s h=0\quad\text{in }\distr(V).
\]
Together with $h=0$ in $V$, the unique continuation property for the
fractional Laplacian (Theorem~\ref{thm:UCP-energy-spaces} with $r=0$) gives
\[
    h=0\quad\text{a.e. in }\mathbb R^n.
\]
Thus $a_1u_1^f=a_2u_2^f$ a.e.~in $\mathbb R^n$.
\end{proof}

\begin{theorem}[Uniqueness for $L^\infty$ conductivities]
\label{thm: uniqueness Linfty general}
Let $0<s<1$ and $n\geq 1$. Let $\Omega\subset\mathbb R^n$ be open bounded set, and assume that $W\subset\Omega_e$ is a nonempty open set. Let $\gamma_1,\gamma_2\in L^\infty(\mathbb R^n)$ be uniformly elliptic. If
\[
    \Lambda^s_{\gamma_1}f|_W=\Lambda^s_{\gamma_2}f|_W
    \qquad\text{for all }f\in C_c^\infty(W),
\]
then
\[
    \gamma_1=\gamma_2
    \qquad\text{a.e. in }\mathbb R^n.
\]
\end{theorem}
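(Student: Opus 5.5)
The plan is to follow the proof of Theorem~\ref{thm:uniqueness-1d} verbatim, substituting the general-dimension ingredients from Section~\ref{sec: higher-dimensional generalization}.

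First, Lemma~\ref{lemma: exterior determination general} gives $\gamma_1=\gamma_2$ a.e.\ in $W$, hence $a_1=a_2$ a.e.\ in $W$. Next, choose nonempty open sets $U,V\Subset W$ with $\overline U\cap\overline V=\emptyset$. Take for instance two small disjoint balls, together with a cutoff $f\in C_c^\infty(W)$ such that $f\geq c_f>0$ on $U$ and $f=0$ in a neighborhood of $\overline V$. Lemma~\ref{lemma: equality weighted solutions general} then yields
\[
    w\vcentcolon=a_1u_1^f=a_2u_2^f \quad\text{a.e. in }\mathbb R^n,
\]
with $w\in L^2(\mathbb R^n)$. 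Consequently, for $j=1,2$ and a.e.\ $x,y$,
\[
    a_j(x)a_j(y)\bigl(u_j^f(x)-u_j^f(y)\bigr)=a_j(y)w(x)-a_j(x)w(y).
\]
Put $\delta_a=a_1-a_2\in L^\infty(\mathbb R^n)$. Then $\delta_a=0$ a.e.\ in $U$.

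Second, test the DN identity with $\varphi\in C_c^\infty(U)$ to get
\[
    0=\frac{2}{c_{n,s}}\bigl(B^s_{\gamma_1}(u_1^f,\varphi)-B^s_{\gamma_2}(u_2^f,\varphi)\bigr)
    =\iint_{\mathbb R^{2n}}\frac{(\delta_a(y)w(x)-\delta_a(x)w(y))(\varphi(x)-\varphi(y))}{|x-y|^{n+2s}}\,dx\,dy.
\]
Split $\mathbb R^{2n}$ into $U\times U$, $U^c\times U^c$, $U\times U^c$ and $U^c\times U$. The integrand vanishes on the first block because $\delta_a=0$ there, and on the second because $\varphi=0$ there. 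By symmetry the two mixed blocks combine, giving
\[
    2\int_U\varphi(x)w(x)\int_{U^c}\frac{\delta_a(y)}{|x-y|^{n+2s}}\,dy\,dx=0.
\]
Before doing this, one must check absolute integrability of the mixed terms. This is the one point I would verify carefully, by localizing to $\operatorname{supp}\varphi\Subset U$, where $|x-y|\geq d>0$, and using $w\in L^2$, $\delta_a\in L^\infty$. Near the diagonal, the terms are controlled by the $H^s$ energies of $u_j^f$ and $\varphi$.

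Third, since $\delta_a\in L^\infty\subset L_s$ vanishes in $U$, \cite[Proposition~2.1.4]{silvestrephdthesis} shows that $(-\Delta)^s\delta_a$ is continuous in $U$ and satisfies
\[
    (-\Delta)^s\delta_a(x)=-c_{n,s}\int_{U^c}\frac{\delta_a(y)}{|x-y|^{n+2s}}\,dy, \qquad x\in U.
\]
On $U$ we have $w=a_1f\geq\lambda_0^{1/2}c_f>0$ a.e. Since $\varphi$ is arbitrary, the previous identity gives $w\,(-\Delta)^s\delta_a=0$ a.e.\ in $U$, and therefore $(-\Delta)^s\delta_a=0$ in $\distr(U)$.

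Finally, Theorem~\ref{theorem:bounded-fractional-ucp}, applied with $g=\delta_a\in L_s$ and $V=U$, gives $\delta_a=0$ a.e.\ in $\mathbb R^n$. Hence $a_1=a_2$ and $\gamma_1=a_1^2=a_2^2=\gamma_2$ a.e.

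The only genuinely delicate step is the bookkeeping in the second step, namely justifying Fubini and the block decomposition. Everything else is a direct citation of earlier results.
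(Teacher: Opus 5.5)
Your proposal is correct and follows the paper's proof essentially step for step. The shared steps are: exterior determination on $W$; equality of weighted solutions for an $f$ that is positive on $U$ and vanishes near $\overline V$; reduction of the tested DN identity to $\int_U\varphi(x)w(x)\int_{U^c}\delta_a(y)|x-y|^{-n-2s}\,dy\,dx=0$; positivity of $w$ on $U$; and Theorem~\ref{theorem:bounded-fractional-ucp} applied to $\delta_a\in L^\infty(\mathbb R^n)\subset L_s$. Your check of absolute integrability on the mixed blocks (via $\dist(\supp\varphi,U^c)>0$, $w\in L^2$, $\delta_a\in L^\infty$) supplies a detail the paper leaves implicit.
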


\begin{proof}
By exterior determination (Lemma~\ref{lemma: exterior determination general}), we know that $\gamma_1=\gamma_2$ a.e.~in
$W$. Next, choose nonempty open sets $U,V\Subset W$ with $\overline U\cap\overline V=\emptyset$, and let the exterior condition
$f\in C_c^\infty(W)$ satisfy
\[
    f\geq c_f>0 \quad\text{in }U,
    \qquad
    f=0 \quad\text{in a neighborhood of }\overline V.
\]
By Lemma~\ref{lemma: equality weighted solutions general}, we have
\[
    w:=a_1u_1^f=a_2u_2^f
    \qquad\text{a.e. in }\mathbb R^n.
\]
In particular, since $u_j^f=f$ in $U\subset\Omega_e$,
\[
    w=a_1f=a_2f\geq \lambda_0^{1/2}c_f>0
    \qquad\text{a.e. in }U.
\]
The rest of the proof of Theorem~\ref{thm:uniqueness-1d} is independent of the dimension $n$ or the fractional exponent $s$ and therefore we only point out the main points. Set
\[
    \delta_a:=a_1-a_2.
\]
Let $\varphi\in C_c^\infty(U)$. Since
\[
    a_j(x)a_j(y)(u_j^f(x)-u_j^f(y))
    =
    a_j(y)w(x)-a_j(x)w(y),
\]
the equality of the DN maps gives
\[
    0=
    \iint_{\mathbb R^n\times\mathbb R^n}
    \frac{(\delta_a(y)w(x)-\delta_a(x)w(y))(\varphi(x)-\varphi(y))}
    {|x-y|^{n+2s}}\,dxdy .
\]
Because $\delta_a=0$ a.e. in $U$ and $\supp\varphi\subset U$, this reduces,
by symmetry, to
\[
    \int_U \varphi(x)w(x)
    \int_{\mathbb R^n \setminus U}
    \frac{\delta_a(y)}{|x-y|^{n+2s}}\,dy\,dx=0.
\]
The inner integral belongs to $L^1_{\loc}(U)$, by \cite[Proposition~2.1.4]{silvestrephdthesis}. Because
$w\geq \lambda_0^{1/2}c_f>0$ a.e. in $U$, we get
\[
    \int_{\mathbb R^n \setminus U}
    \frac{\delta_a(y)}{|x-y|^{n+2s}}\,dy=0
    \qquad\text{for a.e. }x\in U.
\]
Equivalently,
\[
    (-\Delta)^s\delta_a=0\quad\text{in }\distr(U),
    \qquad
    \delta_a=0\quad\text{a.e. in }U.
\]
Since $\delta_a\in L^\infty(\mathbb R^n)$, Theorem~\ref{theorem:bounded-fractional-ucp}
implies
\[
    \delta_a=0\quad\text{a.e. in }\mathbb R^n.
\]
Thus $a_1=a_2$ a.e. in $\mathbb R^n$, and consequently
\[
    \gamma_1=\gamma_2\quad\text{a.e. in }\mathbb R^n.
\]
This completes the proof.
\end{proof}

	\medskip 
	
	\noindent\textbf{Acknowledgments.} 
G.~Uhlmann was partially supported by the National Science Foundation. P.~Zimmermann was supported by the Swiss National Science Foundation under Grant No.~239130.

	\bibliography{refs} 
	
	\bibliographystyle{alpha}
	
\end{document}